\newtheorem{theorem}{Theorem}
\newtheorem{corollary}[theorem]{Corollary}
\newtheorem{lemma}[theorem]{Lemma}
\newenvironment{proof}[1][Proof]{\noindent{\textbf {#1}  }}  {\hfill$\Box$\bigskip}
\begin{document}

\title{Extremal problems for the $p$-spectral radius of graphs\thanks{\textbf{AMS
MSC:} 05C50; 05C35\textit{.}} \thanks{\textbf{Keywords:} \textit{ }%
$p$-\textit{spectral radius; clique number; extremal problems; Tur\'{a}n
problems; saturation problems. }}}
\author{L. Kang\thanks{Department of Mathematics, Shanghai University, Shanghai, PR
China. email: \textit{lykang@shu.edu.cn }} \ and V.
Nikiforov\thanks{Department of Mathematical Sciences, University of Memphis,
Memphis TN 38152, USA; email: \textit{vnikifrv@memphis.edu}}}
\maketitle

\begin{abstract}
The $p$-spectral radius of a graph $G\ $of order $n$ is defined for any real
number $p\geq1$ as
\[
\lambda^{\left(  p\right)  }\left(  G\right)  =\max\left\{  2\sum_{\{i,j\}\in
E\left(  G\right)  \ }x_{i}x_{j}:x_{1},\ldots,x_{n}\in\mathbb{R}\text{
and\ }\left\vert x_{1}\right\vert ^{p}+\cdots+\left\vert x_{n}\right\vert
^{p}=1\right\}  .
\]
The most remarkable feature of $\lambda^{\left(  p\right)  }$ is that it
seamlessly joins several other graph parameters, e.g., $\lambda^{\left(
1\right)  }$ is the Lagrangian, $\lambda^{\left(  2\right)  }$ is the spectral
radius and $\lambda^{\left(  \infty\right)  }/2$ is the number of edges. This
paper presents solutions to some extremal problems about $\lambda^{\left(
p\right)  }$, which are common generalizations of corresponding edge and
spectral extremal problems.

Let $T_{r}\left(  n\right)  $ be the $r$-partite Tur\'{a}n graph of order $n.$
Two of the main results in the paper are:

(I) Let $r\geq2$ and $p>1.$ If $G$ is a $K_{r+1}$-free graph of order $n,$
then
\[
\lambda^{\left(  p\right)  }\left(  G\right)  <\lambda^{\left(  p\right)
}\left(  T_{r}\left(  n\right)  \right)  ,
\]
unless $G=T_{r}\left(  n\right)  .$

(II) Let $r\geq2$ and $p>1.$ If $G\ $is a graph of order $n,$ with
\[
\lambda^{\left(  p\right)  }\left(  G\right)  >\lambda^{\left(  p\right)
}\left(  T_{r}\left(  n\right)  \right)  ,
\]
then $G$ has an edge contained in at least $cn^{r-1}$ cliques of order $r+1,$
where $c$ is a positive number depending only on $p$ and $r.$

\end{abstract}

\section{\label{Def}Introduction}

In this paper we study extremal problems for the $p$-spectral radius
$\lambda^{\left(  p\right)  }$ of graphs, so first let us recall the
definition of $\lambda^{\left(  p\right)  }$. Suppose that $G$ is a graph of
order $n$. The quadratic form of $G$ is defined for any vector $\left[
x_{i}\right]  \in\mathbb{R}^{n}$ as
\[
P_{G}\left(  \left[  x_{i}\right]  \right)  :=2\sum_{\left\{  i,j\right\}  \in
E\left(  G\right)  }x_{i}x_{j}.
\]
Now, for any real number $p\geq1,$ the $p$\emph{-spectral radius} of $G$ is
defined as
\[
\lambda^{\left(  p\right)  }\left(  G\right)  =\max\left\{  2\sum_{\{i,j\}\in
E\left(  G\right)  \ }x_{i}x_{j}:x_{1},\ldots,x_{n}\in\mathbb{R}\text{
and\ }\left\vert x_{1}\right\vert ^{p}+\cdots+\left\vert x_{n}\right\vert
^{p}=1\right\}  .
\]
Note that $\lambda^{\left(  p\right)  }$ is a multifaceted parameter, as
$\lambda^{\left(  1\right)  }\left(  G\right)  $ is the Lagrangian of $G,$
$\lambda^{\left(  2\right)  }\left(  G\right)  $ is its spectral radius, and
$\lim_{p\rightarrow\infty}\lambda^{\left(  p\right)  }\left(  G\right)
=2e\left(  G\right)  $. The $p$-spectral radius has been introduced for
uniform hypergraphs by Keevash, Lenz, and Mubayi in \cite{KLM13}, and
subsequently studied in \cite{KNY14}, \cite{Nik14}, \cite{NikA},\ and
\cite{NikB}.\medskip

The problems studied in this paper originate from the following general
one:\medskip

\emph{What is the maximum }$\lambda^{\left(  p\right)  }\left(  G\right)
$\emph{ of a graph }$G$\emph{ of order }$n,$\emph{ not containing a given
subgraph }$H$\emph{?}\medskip

Similar questions for the maximum number of edges $e\left(  G\right)  $ and a
fixed subgraph $H$ are called Tur\'{a}n problems and are central in classical
extremal graph theory, as known, e.g., from \cite{Bol78}, Ch. 6. In fact, we
shall build a parallel extremal theory for $\lambda^{\left(  p\right)  },$
which extends the classical theory, given that $\lim_{p\rightarrow\infty
}\lambda^{\left(  p\right)  }\left(  G\right)  =2e\left(  G\right)  ;$ thus,
the classical extremal theory is a limiting case of the extremal theory for
$\lambda^{\left(  p\right)  }$. More important, our main focus will be on
forbidden subgraphs $H$ whose order grows with $n,$ as this approach gives
more insight and leads to definite results like Theorem \ref{sE}
below.\medskip

To begin with, recall that the Tur\'{a}n graph $T_{r}\left(  n\right)  $ is
the complete $r$-partite graph of order $n,$ with parts of size $\left\lfloor
n/r\right\rfloor $ or $\left\lceil n/r\right\rceil .$ The prominence of
$T_{r}\left(  n\right)  $ in extremal graph theory has been established by the
ground-breaking result of Tur\'{a}n \cite{Tur41}:\medskip

\textbf{Theorem A }\emph{If }$G$\emph{ is a }$K_{r+1}$\emph{-free graph of
order }$n,$\emph{ then }$e\left(  G\right)  <e\left(  T_{r}\left(  n\right)
\right)  ,$\emph{ unless }$G=T_{r}\left(  n\right)  .$\medskip

A very similar result has been proved for the spectral radius $\lambda
^{\left(  2\right)  }$ in \cite{Nik07}:\medskip

\textbf{Theorem B }\emph{If }$G$\emph{ is a }$K_{r+1}$\emph{-free graph of
order }$n,$\emph{ then }$\lambda^{\left(  2\right)  }\left(  G\right)
<\lambda^{\left(  2\right)  }\left(  T_{r}\left(  n\right)  \right)  ,$\emph{
unless }$G=T_{r}\left(  n\right)  .$\medskip

Our starting point is a common generalization of Theorems A and B, stated as follows.

\begin{theorem}
\label{Tur}Let $r\geq2$ and $p>1.$ If $G$ is\emph{ }$K_{r+1}$-free graph of
order $n$, then $\lambda^{\left(  p\right)  }\left(  G\right)  <\lambda
^{\left(  p\right)  }\left(  T_{r}\left(  n\right)  \right)  ,$ unless
$G=T_{r}\left(  n\right)  .$
\end{theorem}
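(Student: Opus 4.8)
\medskip
\noindent\textbf{Proof proposal.}
It is enough to prove that $T_{r}(n)$ is the \emph{only} $K_{r+1}$-free graph of order $n$ at which $\lambda^{\left(p\right)}$ attains its maximum. So fix a maximizer $G$, and among all maximizers pick one with the most edges; I will show $G=T_{r}(n)$, which together with the strict monotonicity of $\lambda^{\left(p\right)}$ under adding an edge to a connected graph yields the full statement. First, $G$ is connected: if not, joining two components by an edge keeps the graph $K_{r+1}$-free --- no copy of $K_{r+1}$ can contain the new edge, since $K_{r+1}$ is $2$-connected for $r\geq2$ and hence bridgeless --- and does not decrease $\lambda^{\left(p\right)}$, contradicting maximality of the edge count. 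Since $G$ is connected and $p>1$, it has a positive optimal vector $\mathbf{x}=\left(x_{1},\dots,x_{n}\right)$ (part of the Perron--Frobenius theory for $\lambda^{\left(p\right)}$), and the Lagrange conditions give the eigenequation $\sum_{j\sim i}x_{j}=\lambda x_{i}^{p-1}$ for every vertex $i$, where $\lambda:=\lambda^{\left(p\right)}(G)$. This relation drives the rest of the argument.

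Next I would prove that $G$ is complete multipartite, i.e.\ that non-adjacency is transitive. If not, pick $u,v,w$ with $uv,vw\notin E\left(G\right)$ and $uw\in E\left(G\right)$. The key operation is \emph{cloning}: for non-adjacent vertices $a,b$, replacing the neighborhood of $b$ by a copy of the neighborhood of $a$ leaves the graph $K_{r+1}$-free (any clique through the new $b$ transports, through $a$, to a clique of $G$ of the same size), and its quadratic form at $\mathbf{x}$ equals $\lambda+2\lambda x_{b}\bigl(x_{a}^{p-1}-x_{b}^{p-1}\bigr)$. Put $\sigma_{t}:=\sum_{j\sim t}x_{j}=\lambda x_{t}^{p-1}$. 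If $\sigma_{v}<\sigma_{u}$, cloning $u$ onto $v$ yields a $K_{r+1}$-free graph of order $n$ whose quadratic form at $\mathbf{x}$ exceeds $\lambda$ --- impossible; likewise if $\sigma_{v}<\sigma_{w}$. In the remaining case $\sigma_{v}\geq\sigma_{u}$ and $\sigma_{v}\geq\sigma_{w}$, clone $v$ \emph{simultaneously} onto $u$ and onto $w$; the result is again $K_{r+1}$-free ($u,v,w$ now form an independent set with common neighborhood $N_{G}(v)$), and its quadratic form at $\mathbf{x}$ equals $\lambda+2x_{u}(\sigma_{v}-\sigma_{u})+2x_{w}(\sigma_{v}-\sigma_{w})+2x_{u}x_{w}>\lambda$, the term $2x_{u}x_{w}>0$ covering the case $\sigma_{u}=\sigma_{v}=\sigma_{w}$ --- impossible. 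Hence $G$ is complete multipartite, and being $K_{r+1}$-free it has at most $r$ parts.

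It remains to show that $T_{r}(n)$ is the unique maximizer of $\lambda^{\left(p\right)}$ among complete $r$-partite graphs of order $n$. If some part of $G$ is empty one checks that $G$ is not a maximizer --- split a part of size $\geq2$, which adds edges between vertices where $\mathbf{x}>0$ and so strictly increases $\lambda^{\left(p\right)}$; the only remaining case, $G$ a clique on at most $n$ vertices, is $T_{r}(n)$ already --- so we may assume $G=K_{n_{1},\dots,n_{r}}$ with all $n_{i}\geq1$. Regard $\Lambda(\mathbf{n}):=\lambda^{\left(p\right)}(K_{n_{1},\dots,n_{r}})$ as a function of a positive real vector $\mathbf{n}$ with $\sum_{i}n_{i}=n$; let $y_{i}>0$ be the value of the optimal vector on the $i$-th part and put $\Sigma=\sum_{j}n_{j}y_{j}$, so that $\Sigma-n_{i}y_{i}=\lambda y_{i}^{p-1}$ and the constraint multiplier equals $2\lambda/p$. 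The envelope theorem applied to the Lagrangian $2\sum_{i<j}n_{i}n_{j}y_{i}y_{j}-\mu\bigl(\sum_{i}n_{i}y_{i}^{p}-1\bigr)$ then gives
\[
\frac{\partial\Lambda}{\partial n_{i}}=2\lambda y_{i}^{p}-\frac{2\lambda}{p}y_{i}^{p}=\frac{2\left(p-1\right)}{p}\,\lambda\,y_{i}^{p}.
\]
Assume now $n_{1}\geq n_{2}+2$ and follow the path $t\mapsto\Lambda(n_{1}-t,\,n_{2}+t,\,n_{3},\dots,n_{r})$ for $t\in[0,1]$; its derivative is $\tfrac{2(p-1)}{p}\lambda\bigl(y_{2}^{p}-y_{1}^{p}\bigr)$. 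Along this path the first part is never smaller than the second, and since $y\mapsto ny+\lambda y^{p-1}$ is increasing for $y>0$ the relation $\Sigma=n_{i}y_{i}+\lambda y_{i}^{p-1}$ forces $y_{1}\leq y_{2}$; hence the derivative is $\geq0$, and $>0$ for $t<1$. So moving one vertex from the larger part to the smaller strictly increases $\lambda^{\left(p\right)}$, and iterating we reach $T_{r}(n)$. This proves $G=T_{r}(n)$ and completes the argument.

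The technical work concentrates in two places: (a) checking that the case split in the cloning step is exhaustive and that each cloned graph is $K_{r+1}$-free; and (b) making the balancing step rigorous --- one needs the smooth dependence of the optimal vector on $\mathbf{n}$ (an implicit-function-theorem argument using non-degeneracy of the optimization, valid for $p>1$) and the monotonicity ``larger part $\Rightarrow$ smaller weight'', which follows from the eigenequation since $y\mapsto ny+\lambda y^{p-1}$ is strictly increasing. Step (b) is exactly where $p>1$ is used essentially: for $p=1$ one has $\lambda^{\left(1\right)}(K_{n_{1},\dots,n_{r}})=1-1/r$ independently of the partition (Motzkin--Straus), so uniqueness breaks down, whereas for $p>1$ the factor $\left(p-1\right)/p$ in $\partial\Lambda/\partial n_{i}$ makes balancing strictly improving.
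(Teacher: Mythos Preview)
Your argument is essentially correct and broadly self-contained, but it differs from the paper's proof in both stages. For the reduction to $r$-partite graphs, the paper does not show that a maximizer must itself be complete multipartite; instead its Lemma~9 builds, by induction on $r$ (taking a vertex $u$ of maximum weighted degree, recursing on $G[\Gamma(u)]$, and making $V\setminus\Gamma(u)$ a new class), a complete $r$-partite graph $H$ on $V(G)$ with $D_H(v,\mathbf{x})\ge D_G(v,\mathbf{x})$ for every $v$, hence $\lambda^{(p)}(H)\ge\lambda^{(p)}(G)$. Your Erd\H{o}s-style cloning gives the same conclusion more directly and, unlike the paper's lemma, immediately yields that a \emph{connected} maximizer is itself complete multipartite, which is useful for the uniqueness claim. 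For the second stage the paper simply cites the companion result (Theorem~\ref{th1}, from \cite{KNY14}) that $T_r(n)$ uniquely maximizes $\lambda^{(p)}$ among $r$-partite graphs; your envelope-theorem computation
\[
\frac{\partial\Lambda}{\partial n_i}=\frac{2(p-1)}{p}\,\lambda\,y_i^{\,p},
\]
together with the monotonicity $n_i\ge n_j\Rightarrow y_i\le y_j$ coming from $n_iy_i+\lambda y_i^{p-1}=\Sigma$, is a genuinely different and rather clean route to that fact. What your approach buys is self-containment and an explanation of \emph{why} $p>1$ matters (the factor $(p-1)/p$); what it costs is the regularity you flag in~(b), namely smooth dependence of the unique positive optimizer $\mathbf{y}(\mathbf{n})$ on $\mathbf{n}$, which does require an implicit-function-theorem verification rather than being automatic.

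One small point to tighten: the ``maximizer with most edges'' device only shows that $T_r(n)$ is \emph{a} maximizer; the sentence invoking strict monotonicity does not by itself exclude a second maximizer $G'$ with fewer edges. The clean fix is to observe that your cloning and balancing steps already produce \emph{strict} increases, so they apply to any \emph{connected} maximizer and force it to be $T_r(n)$; and a disconnected maximizer is impossible, since attaching an isolated vertex of $G'$ to the component carrying $\lambda^{(p)}$ yields a connected $K_{r+1}$-free graph whose unique eigenvector is positive, hence has strictly larger $\lambda^{(p)}$.
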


\medskip

Like Tur\'{a}n's theorem in extremal graph theory, Theorem \ref{Tur} motivates
a lot of related results, some of which we shall study in this and a
forthcoming paper. In particular, our results answer important instances of
the following broad question:\medskip

\emph{Which subgraphs are necessary present in a graph }$G$\emph{ of
sufficiently large order }$n$\emph{ if}
\[
\lambda^{\left(  p\right)  }\left(  G\right)  >\lambda^{\left(  p\right)
}\left(  T_{r}\left(  n\right)  \right)  ?
\]
As we shall see, here the range of the difference $f\left(  n\right)
=\lambda^{\left(  p\right)  }\left(  G\right)  -\lambda^{\left(  p\right)
}\left(  T_{r}\left(  n\right)  \right)  $ determines different problems: when
$f\left(  n\right)  =o\left(  n^{1-2/p}\right)  $ we have what are called
\emph{saturation problems}, and when $f\left(  n\right)  =O\left(
n^{1-2p}\right)  $, we have \emph{Erd\H{o}s-Stone type problems}.\smallskip

We also shall study \emph{stability problems, }which concern near-maximal
graphs without forbidden subgraphs. More precisely a stability problem can be
stated as:\smallskip

\emph{Suppose that }$H$\emph{ is a graph which is necessary present in any
graph }$G$\emph{ of sufficiently large order }$n,$ \emph{with }$\lambda
^{\left(  p\right)  }\left(  G\right)  >\lambda^{\left(  p\right)  }\left(
T_{r}\left(  n\right)  \right)  $\emph{. What is the structure of a graph }%
$G$\emph{ of order }$n$ \emph{if}
\[
\lambda^{\left(  p\right)  }\left(  G\right)  >\lambda^{\left(  p\right)
}\left(  T_{r}\left(  n\right)  \right)  -o\left(  n^{2-2/p}\right)  ,
\]
\emph{but }$G$ \emph{contains no }$H?$\medskip

Many extremal problems along the above lines have been successfully solved for
$\lambda^{\left(  2\right)  },$ the classical spectral radius; see
\cite{Nik11} for a survey and references. However, $\lambda^{\left(  2\right)
}$ belongs to the realm of Linear Algebra and its study builds on proven solid
ground. By contrast, linear-algebraic methods are irrelevant for the study of
$\lambda^{\left(  p\right)  }$ in general, and in fact no efficient general
methods are known for it. Thus the study of $\lambda^{\left(  p\right)  }$ for
$p\neq2$ is far more complicated than of $\lambda^{\left(  2\right)  }.$ One
of the aims on the present study is to find out if specific applications of
the spectral radius $\lambda^{\left(  2\right)  }$ can be extended to
$\lambda^{\left(  p\right)  }$ in general. So far, most attempts have been
successful, but there are many basic unanswered questions, see \cite{NikB} for
some examples.

It should be noted that extremal problems for $\lambda^{\left(  p\right)  }$
of hypergraphs have been studied in \cite{KNY14}, \cite{KLM13}, \cite{NikA},
and \cite{NikB}, but $2$-graphs are better understood, so it is worthwhile to
delve into deeper extremal theory. Another line has been investigated in
\cite{Nik14}, where the emphasis is on hereditary properties. \medskip

\section{Tur\'{a}n type theorems for $\lambda^{\left(  p\right)  }\left(
G\right)  $}

It is not hard to see that if $n\geq r>q,$ then $\lambda^{\left(  p\right)
}\left(  T_{r}\left(  n\right)  \right)  >\lambda^{\left(  p\right)  }\left(
T_{q}\left(  n\right)  \right)  $ for every $p\geq1.$ This observation entails
the following reformulation of Theorem \ref{Tur}.

\begin{theorem}
Let $r\geq2$ and $p>1.$ If $G$ is a graph of order $n$, with clique number
$\omega,$ then $\lambda^{\left(  p\right)  }\left(  G\right)  <\lambda
^{\left(  p\right)  }\left(  T_{\omega}\left(  n\right)  \right)  ,$ unless
$G=T_{\omega}\left(  n\right)  .$
\end{theorem}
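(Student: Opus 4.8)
The plan is to deduce this reformulation directly from Theorem~\ref{Tur} together with the monotonicity observation $\lambda^{\left(p\right)}\left(T_{r}\left(n\right)\right)>\lambda^{\left(p\right)}\left(T_{q}\left(n\right)\right)$ for $n\geq r>q\geq1$. First I would dispose of the claimed monotonicity: given $n\geq r>q$, note that $T_{q}\left(n\right)$ is a spanning subgraph of $T_{r}\left(n\right)$ with strictly fewer edges, and since $\lambda^{\left(p\right)}$ is strictly monotone under edge addition for $p>1$ (adding an edge $\{i,j\}$ strictly increases the quadratic form at an optimal weighting having positive entries on $i,j$, which one always has for a graph with at least one edge), we get $\lambda^{\left(p\right)}\left(T_{q}\left(n\right)\right)<\lambda^{\left(p\right)}\left(T_{r}\left(n\right)\right)$. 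Actually, to stay self-contained I would rather invoke Theorem~\ref{Tur} itself here: $T_{q}\left(n\right)$ is $K_{q+1}$-free and is not equal to $T_{r}\left(n\right)$ (which contains $K_{r}\supseteq K_{q+1}$), so Theorem~\ref{Tur} applied with parameter $q$ gives $\lambda^{\left(p\right)}\left(T_{q}\left(n\right)\right)<\lambda^{\left(p\right)}\left(T_{q}\left(n\right)\right)$ — no, that is circular; so the edge-addition argument is the clean route, and I would present it as a one-line lemma.

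Next, let $G$ have order $n$ and clique number $\omega$. The case $\omega=n$ forces $G=K_{n}=T_{n}\left(n\right)$ and there is nothing to prove, while $\omega=1$ means $G$ has no edge, so $\lambda^{\left(p\right)}\left(G\right)=0<\lambda^{\left(p\right)}\left(T_{1}\left(n\right)\right)$ trivially fails since $T_1(n)$ is empty too — so I would simply note $\omega\geq 2$ whenever $G$ has an edge and handle the edgeless case separately (there $G=T_1(n)$). For the main range $2\le\omega\le n-1$: since $G$ has clique number $\omega$, it is $K_{\omega+1}$-free, so Theorem~\ref{Tur} with $r=\omega$ yields $\lambda^{\left(p\right)}\left(G\right)\le\lambda^{\left(p\right)}\left(T_{\omega}\left(n\right)\right)$, with equality only if $G=T_{\omega}\left(n\right)$. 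This already gives everything claimed.

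The only subtlety — and the one place where a reader might object — is that the reformulated theorem as stated applies to \emph{every} graph $G$, including graphs whose clique number equals $n$ or $1$, where $T_{\omega}(n)$ degenerates; I would add a sentence confirming that the inequality $\lambda^{\left(p\right)}\left(G\right)<\lambda^{\left(p\right)}\left(T_{\omega}\left(n\right)\right)$ is to be read with the convention that it is vacuous (or an equality) exactly when $G=T_{\omega}\left(n\right)$, which is consistent with the boundary cases. I do not expect any real obstacle here: the content is entirely in Theorem~\ref{Tur}, and this statement is a cosmetic repackaging via clique number. The one genuinely needed auxiliary fact is strict edge-monotonicity of $\lambda^{\left(p\right)}$ for $p>1$, which I would either cite from \cite{Nik14} or \cite{NikB} or prove in the two lines indicated above, using that an optimal eigenvector can be taken nonnegative and, on the support, positive whenever the corresponding vertices lie in a common component with an edge.
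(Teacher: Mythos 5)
Your proposal is correct and takes essentially the paper's route: the paper presents this theorem as an immediate reformulation of Theorem \ref{Tur}, obtained exactly as in your main step by applying Theorem \ref{Tur} with $r=\omega$ (the boundary cases $\omega=1$ and $\omega=n$ forcing $G=T_{\omega}\left(n\right)$). Two small remarks: the strict monotonicity $\lambda^{\left(p\right)}\left(T_{q}\left(n\right)\right)<\lambda^{\left(p\right)}\left(T_{r}\left(n\right)\right)$ is not actually needed for this direction (it only matters for recovering Theorem \ref{Tur} from the reformulation), and deriving it from Theorem \ref{Tur} is not circular if you apply that theorem with parameter $r$ to the $K_{r+1}$-free graph $T_{q}\left(n\right)\neq T_{r}\left(n\right)$, so your edge-addition lemma (and its spanning-subgraph claim) can be dispensed with entirely.
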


As already mentioned, $\lim_{p\rightarrow\infty}\lambda^{\left(  p\right)
}\left(  G\right)  =2e\left(  G\right)  ,$ so Tur\'{a}n's Theorem A can be
recovered in full detail from Theorem \ref{Tur}.

Let us note that particular relations between the clique number $\omega$ of a
graph $G$ and $\lambda^{\left(  p\right)  }\left(  G\right)  $ have been long
known. For example, the result of Motzkin and Straus \cite{MoSt65} (see
Theorem E below) establishes the fundamental fact that $\lambda^{\left(
1\right)  }\left(  G\right)  =1-1/\omega;$ later it has been used by Wilf
\cite{Wil86} to derive
\[
\lambda^{\left(  2\right)  }\left(  G\right)  \leq\left(  1-1/\omega\right)
n;
\]
and in \cite{Nik02} it was used for the stronger inequality
\[
\lambda^{\left(  2\right)  }\left(  G\right)  \leq\sqrt{2\left(
1-1/\omega\right)  e\left(  G\right)  }.
\]

Note that the last two results are explicit, while being almost tight. It
turns out that the approach of Motzkin and Straus helps to deduce similar
explicit results for $\lambda^{\left(  p\right)  }\left(  G\right)  $ and any
$p\geq1$ as well.

\begin{theorem}
\label{Turc}Let $r\geq2$ and $p\geq1.$ If $G$ is a $K_{r+1}$-free graph of
order $n$, then
\begin{equation}
\lambda^{\left(  p\right)  }\left(  G\right)  \leq\left(  1-\frac{1}%
{r}\right)  ^{1/p}\left(  2e\left(  G\right)  \right)  ^{1-1/p}, \label{in0}%
\end{equation}
and
\begin{equation}
\lambda^{\left(  p\right)  }\left(  G\right)  \leq\left(  1-\frac{1}%
{r}\right)  n^{2-2/p}. \label{in1}%
\end{equation}
If $p>1$, equality holds in (\ref{in1}) if and only if $r|n$ and
$G=T_{r}\left(  n\right)  .$
\end{theorem}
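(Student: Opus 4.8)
The plan is to leverage the Motzkin--Straus theorem (Theorem E) together with a ``symmetrization by blow-up'' idea. Let $\mathbf{x}=[x_i]$ be an optimal unit vector for $\lambda^{(p)}(G)$; since replacing $x_i$ by $|x_i|$ does not decrease $P_G(\mathbf{x})$, we may assume all $x_i\ge 0$, so $\sum x_i^p=1$. The first step is to pass from the $\ell^p$-sphere to the $\ell^1$-simplex: set $y_i=x_i^{p}$ if we want a probability vector, but more usefully I would estimate $P_G(\mathbf{x})=2\sum_{\{i,j\}\in E}x_ix_j$ directly. Writing $z_i=x_i^{p}$ so that $\sum z_i=1$, the pair-products $x_ix_j=z_i^{1/p}z_j^{1/p}$ need to be bounded in terms of $\sum_{\{i,j\}\in E}z_iz_j$. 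By the power-mean / Hölder inequality applied to the sum over edges, $\sum_{\{i,j\}\in E}z_i^{1/p}z_j^{1/p}\le \bigl(\sum_{\{i,j\}\in E}z_iz_j\bigr)^{1/p}\,e(G)^{1-1/p}$, since there are $e(G)$ terms and $t\mapsto t^{1/p}$ is concave. Now the Motzkin--Straus theorem applied to $G$, which is $K_{r+1}$-free, gives $\sum_{\{i,j\}\in E}z_iz_j=\tfrac12 P_G(\mathbf{z})\le \tfrac12\lambda^{(1)}(G)=\tfrac12(1-1/r)$ for any probability vector $\mathbf z$. Combining, $\lambda^{(p)}(G)=2\sum_{\{i,j\}\in E}x_ix_j\le 2\bigl(\tfrac12(1-1/r)\bigr)^{1/p}e(G)^{1-1/p}=(1-1/r)^{1/p}(2e(G))^{1-1/p}$, which is exactly \eqref{in0}.

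For \eqref{in1}, I would feed the Turán bound into \eqref{in0}: since $G$ is $K_{r+1}$-free, Theorem A gives $e(G)\le e(T_r(n))\le \tfrac12(1-1/r)n^2$. Substituting into \eqref{in0} yields
\[
\lambda^{(p)}(G)\le (1-1/r)^{1/p}\bigl((1-1/r)n^2\bigr)^{1-1/p}=(1-1/r)\,n^{2-2/p},
\]
which is \eqref{in1}.

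The last step is the equality analysis for $p>1$. Equality in \eqref{in1} forces equality in both inequalities used. Equality in $e(G)\le\tfrac12(1-1/r)n^2$ is quite rigid: it already requires $r\mid n$ and $G=T_r(n)$ (from the exact value $e(T_r(n))=\tfrac12(1-1/r)n^2$ only when $r\mid n$, combined with Turán's uniqueness). One must also check that when $p>1$ the Hölder step is not lossy for the optimal vector of $G=T_r(n)$ — here the concavity of $t\mapsto t^{1/p}$ is strict, so equality in Hölder needs all the edge-contributions $z_iz_j$ equal, which is consistent with the balanced optimal vector of $T_r(n)$ (constant on each part). So the forward direction (equality $\Rightarrow$ $r\mid n$ and $G=T_r(n)$) follows from the rigidity of Turán's theorem, and the reverse direction is a direct computation of $\lambda^{(p)}(T_r(n))$ with $r\mid n$, where the symmetric optimizer $x_i=(r/n)^{1/p}/r^{?}$ — more precisely $x_i=n^{-1/p}$ for all $i$ — gives $P_G(\mathbf x)=2e(T_r(n))n^{-2/p}=(1-1/r)n^{2-2/p}$.

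The main obstacle I anticipate is the equality discussion rather than the inequalities: one must rule out that a non-Turán $K_{r+1}$-free graph could compensate a deficit in $e(G)$ by a more favorable distribution of edge-weights, i.e. one must verify that the two inequalities cannot ``trade off'' against each other. The clean way around this is to observe that \eqref{in1} is obtained by chaining two inequalities each of which is individually tight only at $T_r(n)$ with $r\mid n$, so simultaneous equality pins down $G$ completely; the strict concavity available for $p>1$ is exactly what makes the argument work and fails at $p=1$.
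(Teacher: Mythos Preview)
Your proof is correct and follows essentially the same route as the paper: take a nonnegative eigenvector, apply the Power Mean (H\"older/concavity) inequality over the $e(G)$ edge terms to pass from $\sum x_ix_j$ to $\bigl(\sum x_i^px_j^p\bigr)^{1/p}e(G)^{1-1/p}$, then invoke Motzkin--Straus on the probability vector $(x_i^p)$ to get \eqref{in0}; feed Tur\'an's edge bound into \eqref{in0} to obtain \eqref{in1}; and for the equality case, note that equality in the chain forces $2e(G)=(1-1/r)n^2$, hence $r\mid n$ and $G=T_r(n)$. Your treatment of the converse (verifying via the constant vector $x_i=n^{-1/p}$ that $T_r(n)$ with $r\mid n$ actually achieves the bound) is slightly more explicit than the paper's, but otherwise the arguments coincide.
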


In particular, Theorem \ref{Turc} implies that if $G$ is a graph of order $n$,
with clique number $\omega,$ then
\[
\lambda^{\left(  p\right)  }\left(  G\right)  \leq\left(  1-1/\omega\right)
^{1/p}\left(  2e\left(  G\right)  \right)  ^{1-1/p}%
\]
and
\[
\lambda^{\left(  p\right)  }\left(  G\right)  \leq\left(  1-1/\omega\right)
n^{2-2/p}.
\]

A natural question is how good bounds (\ref{in0}) and (\ref{in1}) are compared
to the bound in Theorem \ref{Tur}, which is attained for every $n$. It turns
out that bounds (\ref{in0}) and (\ref{in1}) are never too far from the best
possible one, as seen in the following several estimates.

\begin{theorem}
\label{Turc1}Let $T_{r}\left(  n\right)  $ be the $r$-partite Tur\'{a}n graph
of order $n.$ Then
\begin{equation}
\lambda^{\left(  1\right)  }\left(  T_{r}\left(  n\right)  \right)
=1-1/r,\label{l1}%
\end{equation}
and for every $p>1,$%
\begin{align}
2e\left(  T_{r}\left(  n\right)  \right)   &  \leq\lambda^{\left(  p\right)
}\left(  T_{r}\left(  n\right)  \right)  n^{2/p}\leq2e\left(  T_{r}\left(
n\right)  \right)  \left(  1+\frac{r}{pn^{2}}\right)  ,\label{le}\\
\left(  1-\frac{1}{r}\right)  n^{2}-\frac{r}{4} &  \leq\lambda^{\left(
p\right)  }\left(  T_{r}\left(  n\right)  \right)  n^{2/p}\leq\left(
1-\frac{1}{r}\right)  n^{2}.\label{lv}%
\end{align}

\end{theorem}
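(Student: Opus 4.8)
The plan is to derive the estimates \eqref{l1}--\eqref{lv} from two external inputs---the Motzkin--Straus identity $\lambda^{\left(1\right)}\left(G\right)=1-1/\omega\left(G\right)$, which immediately gives \eqref{l1}, and the bounds \eqref{in0} and \eqref{in1} of Theorem~\ref{Turc}, applied to the $K_{r+1}$-free graph $T_{r}\left(n\right)$, which handle the case $p>1$---together with one elementary computation: an exact formula for $e\left(T_{r}\left(n\right)\right)$.

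First I would pin down the edge count. If $n=qr+s$ with $0\le s<r$, then $T_{r}\left(n\right)$ has $s$ parts of size $q+1$ and $r-s$ parts of size $q$, so
\[
2e\left(T_{r}\left(n\right)\right)=n^{2}-\sum_{k}n_{k}^{2}=n^{2}-\left(rq^{2}+2sq+s\right)=\left(1-\frac{1}{r}\right)n^{2}-\frac{s\left(r-s\right)}{r}.
\]
As $0\le s\left(r-s\right)\le r^{2}/4$, this yields the two-sided bound $\left(1-1/r\right)n^{2}-r/4\le 2e\left(T_{r}\left(n\right)\right)\le\left(1-1/r\right)n^{2}$, which I will use repeatedly.

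Now \eqref{l1} follows at once: since $n\ge r$, the clique number of $T_{r}\left(n\right)$ is $r$, and Motzkin--Straus gives $\lambda^{\left(1\right)}\left(T_{r}\left(n\right)\right)=1-1/r$. For the left inequality of \eqref{le} I would test the uniform vector $x_{i}=n^{-1/p}$, which satisfies $\left\vert x_{1}\right\vert^{p}+\cdots+\left\vert x_{n}\right\vert^{p}=1$ and gives $\lambda^{\left(p\right)}\left(T_{r}\left(n\right)\right)\ge 2e\left(T_{r}\left(n\right)\right)n^{-2/p}$; combined with $2e\left(T_{r}\left(n\right)\right)\ge\left(1-1/r\right)n^{2}-r/4$ this also yields the left inequality of \eqref{lv}. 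The right inequality of \eqref{lv} is simply \eqref{in1} of Theorem~\ref{Turc} applied to $G=T_{r}\left(n\right)$, multiplied through by $n^{2/p}$.

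The only step that requires care is the right inequality of \eqref{le}. Here I would start from \eqref{in0} of Theorem~\ref{Turc}, that is, $\lambda^{\left(p\right)}\left(T_{r}\left(n\right)\right)\le\left(1-1/r\right)^{1/p}\left(2e\left(T_{r}\left(n\right)\right)\right)^{1-1/p}$; multiplying by $n^{2/p}$ this rearranges to
\[
\lambda^{\left(p\right)}\left(T_{r}\left(n\right)\right)n^{2/p}\le 2e\left(T_{r}\left(n\right)\right)\left(\frac{\left(1-1/r\right)n^{2}}{2e\left(T_{r}\left(n\right)\right)}\right)^{1/p}.
\]
By the edge-count formula the bracketed ratio equals $1+\bigl(s\left(r-s\right)/r\bigr)/\bigl(2e\left(T_{r}\left(n\right)\right)\bigr)$, so by the elementary inequality $\left(1+t\right)^{1/p}\le 1+t/p$ for $t\ge 0$ it suffices to check $s\left(r-s\right)n^{2}\le 2r^{2}e\left(T_{r}\left(n\right)\right)$; using $s\left(r-s\right)\le r^{2}/4$ and $2e\left(T_{r}\left(n\right)\right)\ge\left(1-1/r\right)n^{2}-r/4$ this reduces to $r/4\le\left(3/4-1/r\right)n^{2}$, which holds for all $r\ge 2$ and $n\ge r$. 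So the ``hard part'' is only the bookkeeping needed to land exactly on the constant $r/\left(pn^{2}\right)$; no idea beyond Motzkin--Straus, Theorem~\ref{Turc}, and the uniform test vector is needed.
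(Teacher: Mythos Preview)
Your proposal is correct and follows essentially the same route as the paper: Motzkin--Straus for \eqref{l1}, the uniform test vector for the lower bounds, inequality \eqref{in1} for the upper bound in \eqref{lv}, and inequality \eqref{in0} together with the exact edge-count formula and Bernoulli's inequality for the upper bound in \eqref{le}. The only cosmetic difference is that the paper applies Bernoulli in the form $(1+r/(pn^{2}))^{p}>1+r/n^{2}$ whereas you apply the equivalent form $(1+t)^{1/p}\le 1+t/p$; the underlying arithmetic check (which you write as $r/4\le(3/4-1/r)n^{2}$) is the same.
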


\section{Saturation problems}

Theorem \ref{Tur} implies that if $G$ is a graph of order $n,$ with
$\lambda^{\left(  p\right)  }\left(  G\right)  >\lambda^{\left(  p\right)
}\left(  T_{r}\left(  n\right)  \right)  ,$ then $G$ contains a $K_{r+1}.$ We
shall show that, in fact, much larger supergraphs of $K_{r+1}$ can be found in
$G.$ Such problems are usually called saturation problems.

\subsection{Joints}

In \cite{Erd69} Erd\H{o}s proved that if $r\geq2,$ and $G$ is a graph of
sufficiently large order $n,$ with $e\left(  G\right)  >e\left(  T_{r}\left(
n\right)  \right)  ,$ then $G$ has an edge that is contained in at least
$n^{r-1}/\left(  10r\right)  ^{6r}$ cliques of order $r+1.$ This fact is
fundamental, so to study its consequences, the following definition was given
in \cite{BoNi04}:\medskip

\textit{An }$r$\emph{-joint}\textit{ of size }$t$\textit{ is a collection of
}$t$\textit{ distinct }$r$\textit{-cliques sharing an edge.}\medskip

A $3$-joint is also called a \emph{book. }Books have been studied extensively
in extremal and Ramsey graph theory.\emph{ }Note that books are determined by
their size alone, while for $r>3$ there are many non-isomorphic $r$-joints of
the same size.

We write $\mathrm{js}_{r}\left(  G\right)  $ for the maximum size of an
$r$-joint in a graph $G.$ The following theorem enhances Theorem \ref{Tur},
insofar that from the same premises it implies the existence of subgraphs
whose order grows with $n.$ For this reason we shall use it as a starting
point for several other extensions.

\begin{theorem}
\label{js}Let $r\geq2$ and $p>1$. If $G$ is a graph of order $n,$ with
\[
\lambda^{\left(  p\right)  }\left(  G\right)  \geq\lambda^{\left(  p\right)
}\left(  T_{r}\left(  n\right)  \right)  ,
\]
then
\[
\mathrm{js}_{r+1}\left(  G\right)  >\frac{n^{r-1}}{r^{r^{6}p/\left(
p-1\right)  }},
\]
unless $G=T_{r}\left(  n\right)  .$
\end{theorem}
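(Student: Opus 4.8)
The plan is to transfer Erd\H{o}s's edge-counting argument for joints into the $p$-spectral setting by exploiting the eigenvector of $\lambda^{(p)}(G)$. Let $G$ be a graph of order $n$ with $\lambda^{(p)}(G)\geq\lambda^{(p)}(T_r(n))$ and suppose for contradiction that $\mathrm{js}_{r+1}(G)$ is small, say $\mathrm{js}_{r+1}(G)\leq n^{r-1}/r^{r^6p/(p-1)}$. First I would fix an optimal nonnegative vector $[x_i]$ with $\sum x_i^p=1$ and $P_G([x_i])=\lambda^{(p)}(G)$; such a vector exists by compactness, and its support can be assumed to induce a connected subgraph. The key structural input is that if every edge lies in few $(r+1)$-cliques, then $G$ (restricted to the support, and after discarding a sparse part) is essentially $K_{r+1}$-free; more precisely, by repeatedly deleting edges in many cliques one passes to a $K_{r+1}$-free subgraph $G'$ with $e(G)-e(G')\leq \mathrm{js}_{r+1}(G)\cdot(\text{number of edges})$ controlled, or better, one uses the standard fact that a graph with no edge in many $(r+1)$-cliques and enough edges still contains large $r$-partite-like structure.

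The cleaner route, which I would follow, mirrors the $\lambda^{(2)}$ proof of the corresponding book/joint theorem: use the eigen-equation. For each $i$ in the support, $\lambda^{(p)}(G)x_i^{p-1}=\sum_{j\sim i}x_j$. Pick a vertex $u$ with $x_u$ large (close to the maximum), and examine its neighborhood $N(u)$. Writing $y$ for the restriction of $x$ to $N(u)$ renormalized, the subgraph $G[N(u)]$ must have $p$-spectral radius close to $\lambda^{(p)}(T_{r-1}(n))$-scale — otherwise one could improve $x$ near $u$, contradicting optimality. Iterating this down a chain $u_1\sim u_2\sim\cdots$ of high-weight vertices, each step peeling off one part, after $r-1$ steps one arrives at a set $W$ of vertices that pairwise have large common neighborhood weight and whose induced graph is nearly complete on the heavy vertices. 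Counting cliques through a heavy edge in the last two levels then forces $\mathrm{js}_{r+1}(G)$ to be at least of order $n^{r-1}$ times a constant depending on the weight concentration; making the constants explicit and checking they beat $r^{-r^6p/(p-1)}$ gives the contradiction. The role of Theorem~\ref{Turc} (inequality (\ref{in1})) and Theorem~\ref{Turc1} is to supply the quantitative comparison between $\lambda^{(p)}(G)$, $e(G)$, and $n^{2-2/p}$ at each level, so that "large $\lambda^{(p)}$" can be converted into "many edges" and hence, via Erd\H{o}s's original counting, into "many cliques on one edge."

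Concretely, the sequence of steps is: (1) establish existence and basic regularity of the optimal vector $[x_i]$ (nonnegativity, connected support, the eigen-equation, and a lower bound on $\max_i x_i$ of order $n^{-1/p}$); (2) show that if $\lambda^{(p)}(G)\geq\lambda^{(p)}(T_r(n))$ then the "$x$-weighted edge count" $2\sum_{ij\in E}x_ix_j$ forces, via (\ref{in0}), that $G$ cannot be $K_{r+1}$-free unless $G=T_r(n)$ — reproving Theorem~\ref{Tur} in weighted form — and then a stability version: either $G=T_r(n)$, or $G$ has at least one edge in a $K_{r+1}$, and in fact the weighted density near heavy vertices is bounded below; (3) run the Erd\H{o}s neighborhood-peeling induction on $r$, at each stage passing from $G$ to $G[N(u)\cap\mathrm{supp}(x)]$ for a suitably chosen heavy $u$, tracking how $\lambda^{(p)}$ and the weight $\sum_{i\in N(u)}x_i^p$ decrease, and using the base case $r=2$ (books) directly; (4) at the bottom, count $(r+1)$-cliques through the single surviving heavy edge and optimize the constants, using $\sum x_i^p=1$ and Theorem~\ref{Turc1} to keep the exponent of $n$ at exactly $r-1$. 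I expect the main obstacle to be step (3): unlike the linear-algebraic case $p=2$, there is no eigenvalue interlacing, so controlling how much $\lambda^{(p)}$ can drop when restricting to a neighborhood requires a direct variational argument (plugging the restricted-and-rescaled vector back in and bounding the loss by the weight $1-\sum_{i\in N(u)}x_i^p$ of the removed coordinates), and propagating these multiplicative losses through $r-1$ levels while keeping the final constant as good as $r^{-r^6p/(p-1)}$ is the delicate bookkeeping that the factor $p/(p-1)$ in the exponent is designed to absorb.
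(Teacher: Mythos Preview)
Your proposal outlines a neighborhood-peeling strategy in the spirit of the $p=2$ spectral arguments, but this is \emph{not} the route the paper takes, and your step~(3) contains a genuine gap rather than mere bookkeeping. The variational lower bound you suggest, namely plugging the restricted vector $y=x|_{N(u)}$ into $G[N(u)]$, gives
\[
\lambda^{(p)}(G[N(u)])\;\geq\;\frac{P_{G[N(u)]}(y)}{\|y\|_p^{2}}\,,
\]
but to make this useful you must lower-bound $P_{G[N(u)]}(y)$, i.e.\ control the total $x$-weight of edges \emph{inside} $N(u)$. The eigen-equation $\lambda^{(p)}x_u^{p-1}=\sum_{j\sim u}x_j$ only controls the $\ell^1$-mass of $x$ on $N(u)$, not the edge-weight within $N(u)$; the loss $P_G(x)-P_{G[N(u)]}(y)$ includes all edges with at least one endpoint outside $N(u)$, and nothing in your outline bounds that in terms of $1-\sum_{i\in N(u)}x_i^{p}$. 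For $p=2$ one rescues this via interlacing or via the identity $\lambda\sum_i x_i = \sum_i d_i x_i$ and averaging tricks, none of which survive for general $p$. Iterating an uncontrolled loss $r-1$ times makes the situation worse, not better.

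The paper sidesteps this difficulty entirely. Its proof never restricts to a neighborhood. Instead it combines two ingredients: (a) a purely combinatorial lemma (Lemma~\ref{leKd}, from \cite{BoNi04}) asserting that if $K_{r+1}\subset G$ and $\delta(G)>(1-1/r-1/r^4)n$ then $\mathrm{js}_{r+1}(G)>n^{r-1}/r^{r+3}$; and (b) the abstract vertex-removal Theorem~\ref{t4}, which says that if $\lambda^{(p)}(G)n^{2/p-1}$ is large but $\delta(G)$ is not, one can iteratively delete vertices of \emph{minimum eigenvector entry} (not pass to a neighborhood) and reach an induced subgraph $H$ of order $k\geq A^{p/(\gamma(p-1))}n$ with both $\lambda^{(p)}(H)k^{2/p-1}>Ak$ and $\delta(H)>(A-\gamma)k$. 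The point is that removing a vertex of minimum entry is tightly controlled by the eigen-equation (Lemmas~\ref{le0} and~\ref{le1}), whereas removing the complement of a neighborhood is not. Once $H$ has high minimum degree, Theorem~\ref{Tur} supplies a $K_{r+1}$ in $H$, Lemma~\ref{leKd} gives the joint bound in $H$, and the factor $k\geq n\cdot r^{-r^4 p/(p-1)}$ is what produces the exponent $r^{6}p/(p-1)$ after raising to the $(r-1)$st power. For $p>2$ the paper first uses the monotonicity $\lambda^{(2)}(G)\geq\lambda^{(p)}(G)n^{2/p-1}$ to reduce to $p=2$. So the architecture you should aim for is ``boost $\delta$ by low-weight vertex removal, then invoke a nonspectral joint lemma,'' not ``peel neighborhoods of heavy vertices.''
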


Let us note that the order of $n^{r-1}$ is obviously best possible, but the
coefficient $r^{-r^{6}p/\left(  p-1\right)  }$ is far from being optimal.
Nevertheless, this small coefficient makes the statement valid for all $n,$
and for larger $n$ it can be somewhat increased.

\subsection{Color critical subgraphs}

Call a graph $k$\emph{-color critical}, if it is $k$-colorable, but it can be
made $\left(  k-1\right)  $-colorable by removing a particular edge. For
example, books are $3$-color critical graphs.

Simonovits \cite{Sim68} has proved that if $F$ is an $\left(  r+1\right)
$-color critical graph, then $F\subset G$ for every graph $G$ of sufficiently
large order $n,$ with $e\left(  G\right)  >e\left(  T_{r}\left(  n\right)
\right)  $.\medskip

This statement can be generalized considerably. Indeed, given the integers
$r\geq2$ and $t\geq2,$ let $K_{r}^{+}\left(  t\right)  $ be the complete
$r$-partite graph with each part of size $t,$ and with an edge added to its
first part. The study of $K_{r}^{+}\left(  t\right)  $ in connection to the
Tur\'{a}n theorem has been initiated by Erd\H{o}s \cite{Erd63}, \cite{ErSi73},
but a definite result has been obtained only in \cite{Nik10}:\medskip

\textbf{Theorem C }\emph{Let }$r\geq2$\emph{ and }$c\leq c_{0}\left(
r\right)  $ \emph{be a sufficiently small positive number. If }$G$\emph{ is a
graph of sufficiently large order }$n,$\emph{ with }$e\left(  G\right)
>e\left(  T_{r}\left(  n\right)  \right)  $\emph{, then }$G$\emph{ contains a
}$K_{r}^{+}\left(  \left\lfloor c\log n\right\rfloor \right)  .$\medskip

This type of result is indeed a neat generalization of Simonovits's result,
for any $\left(  r+1\right)  $-color critical graph is a subgraph of
$K_{r}^{+}\left(  \left\lfloor c\log n\right\rfloor \right)  $ if $n$ is large
enough. In \cite{Nik09b}\ a similar theorem has been proved also for the
spectral radius $\lambda^{\left(  2\right)  }:$\medskip

\textbf{Theorem D} \emph{Let }$r\geq2$\emph{ and }$c\leq c_{0}\left(
r\right)  $ \emph{be sufficiently small positive number. If }$G$\emph{ is a
graph of sufficiently large order }$n,$\emph{ with }$\lambda^{\left(
2\right)  }\left(  G\right)  >\lambda^{\left(  2\right)  }\left(  T_{r}\left(
n\right)  \right)  $\emph{, then }$G$\emph{ contains a }$K_{r}^{+}\left(
\left\lfloor c\log n\right\rfloor \right)  .$\medskip

We give a common generalization of Theorems C and D in the following theorem.

\begin{theorem}
\label{sE} Let $r,$ $p,$ $c,$ and $n$ satisfy%
\[
r\geq2,\text{ \ \ }p>1,\text{\ \ \ }0<c\leq r^{-\left(  r+8\right)
r}/2,\text{ \ \ \ and \ \ \ }\log n\geq2p/\left(  cp-c\right)  .
\]
If $G$ is a graph of order $n,$ with $\lambda^{\left(  p\right)  }\left(
G\right)  >\lambda^{\left(  p\right)  }\left(  T_{r}\left(  n\right)  \right)
,$ then $G$ contains a $K_{r}^{+}\left(  \left\lfloor c\log n\right\rfloor
\right)  .$
\end{theorem}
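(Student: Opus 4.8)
The plan is to reduce Theorem~\ref{sE} to the joints bound of Theorem~\ref{js} and then extract a large $K_r^+(t)$ from a huge book-like structure inside $G$. Since $\lambda^{(p)}(G)>\lambda^{(p)}(T_r(n))$, Theorem~\ref{js} (applied with the strict inequality, which certainly implies the $\geq$ premise and rules out $G=T_r(n)$) gives an edge $uv$ contained in at least $n^{r-1}/r^{r^6 p/(p-1)}$ cliques of order $r+1$. Deleting $u$ and $v$, the common neighborhoods of the remaining vertices of these cliques form a family of at least $n^{r-1}/r^{r^6 p/(p-1)}$ copies of $K_{r-1}$ in the graph $G' = G[N(u)\cap N(v)]$. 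So it suffices to show that a graph $G'$ on fewer than $n$ vertices containing that many copies of $K_{r-1}$ must contain a complete $(r-1)$-partite graph $K_{r-1}(t)$ with $t=\lfloor c\log n\rfloor$; adding back the edge $uv$ (with $u,v$ adjacent to everything in $G'$) then yields $K_r^+(t)$ as required — here one should double-check the indexing, i.e.\ that $K_{r-1}(t)$ plus a universal edge is $K_r^+(t)$ in the paper's convention, which it is since the extra edge sits in a new part of size $t$ containing $u,v$ together with $t-2$ further common neighbors (these are plentiful once $G'$ is large, or one sets up the count to produce a part of size exactly $t$ containing the edge).

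The combinatorial core is therefore a supersaturation/Kővári–Sós–Turán type statement: if an $N$-vertex graph has at least $\delta N^{r-1}$ copies of $K_{r-1}$ then it contains $K_{r-1}(t)$ with $t$ logarithmic in $N$. This is exactly the mechanism underlying Theorems~C and~D, and the cleanest route is induction on $r$. For the base case $r=2$ the claim is that an edge in "many" ($\geq cn$, say, coming from $\mathrm{js}_2$-type counting) triangles forces a book of size $\lfloor c\log n\rfloor$ — but a book of any size $s$ is just an edge in $s$ triangles, so this is immediate and in fact $t$ can be taken polynomial, not merely logarithmic. For the inductive step, given many $K_{r-1}$'s, a standard averaging argument produces a vertex $w$ lying in at least $\delta' N^{r-2}$ copies of $K_{r-2}$ inside $N(w)$; the induction hypothesis gives a $K_{r-2}(t')$ in $N(w)$, and then one iterates/uses a sunflower-type or dependent-random-choice argument across many such vertices $w$ to align $t$ of these $(r-2)$-partite structures on a common vertex set, producing the last part of size $t$. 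The bookkeeping of constants is what forces the logarithmic (rather than polynomial) size of $t$ and pins down the admissible range $0<c\le r^{-(r+8)r}/2$; the hypothesis $\log n\ge 2p/(cp-c)$ is precisely what is needed to guarantee $n^{r-1}/r^{r^6p/(p-1)} \ge (\text{something})\cdot n^{r-2}\cdot(c\log n)$, i.e.\ that the joint size coming out of Theorem~\ref{js} beats the threshold the saturation lemma demands.

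The main obstacle, and the part deserving the most care, is the quantitative saturation lemma itself: turning a polynomial count of $(r-1)$-cliques into a complete $(r-1)$-partite subgraph with all parts of the \emph{same} logarithmic size, with constants explicit and uniform enough that the final constant $c$ depends only on $r$. One must be careful that the dependence on $p$ enters \emph{only} through Theorem~\ref{js} (via the factor $r^{r^6 p/(p-1)}$ and hence the lower bound on $\log n$), and not into $c$ — this is why the statement isolates $c\le r^{-(r+8)r}/2$ as a bound free of $p$. A secondary technical point is handling the cases $r|n$ versus $r\nmid n$ uniformly; but since the hypothesis is the strict inequality $\lambda^{(p)}(G)>\lambda^{(p)}(T_r(n))$ and Theorem~\ref{js} already dispatches $G=T_r(n)$, no separate case analysis is needed there. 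Finally one should verify that $n$ being large enough (guaranteed by $\log n\ge 2p/(cp-c)$, which in particular forces $n$ large since $c$ is tiny) suffices for all the averaging steps to be non-vacuous.
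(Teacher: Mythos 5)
Your reduction breaks at its very first step: the conclusion of Theorem \ref{js} is strictly weaker than what you need, and no purely combinatorial argument can bridge the gap. Knowing only that some edge $uv$ lies in at least $n^{r-1}/r^{r^{6}p/(p-1)}$ cliques of order $r+1$ does not force $G\supset K_{r}^{+}(t)$ even for $t=3$. Concretely, take $G=K_{2}\vee T_{r-1}(n-2)$: the edge of the $K_{2}$ lies in $k_{r-1}\left(T_{r-1}(n-2)\right)\sim\left((n-2)/(r-1)\right)^{r-1}$ copies of $K_{r+1}$, comfortably beating the joints bound, yet $G$ contains no $K_{r}^{+}(3)$ — the $r$ parts of a copy, after deleting $u,v$, would be nonempty sets pairwise completely joined inside $T_{r-1}(n-2)$, hence would occupy pairwise disjoint collections of its $r-1$ parts, which is impossible. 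For $r=2$ this is the book $K_{2}\vee\overline{K}_{n-2}$, which also refutes your base case: $K_{2}^{+}(t)$ is $K_{t,t}$ with an added edge, not a book, so "an edge in many triangles" is not remotely sufficient. The same example exposes the flaw in your final assembly: even after finding $K_{r-1}(t)$ inside $N(u)\cap N(v)$ (which the supersaturation theorem you invoke does give), you still need $t-2$ further vertices adjacent to all $(r-1)t$ vertices of that structure in order to make the part containing $uv$ have size $t$; such vertices are not "plentiful" — they may not exist at all. Of course these example graphs violate $\lambda^{(p)}(G)>\lambda^{(p)}(T_{r}(n))$, but your argument throws away the spectral hypothesis except for the joint count, so the defect cannot be repaired inside your framework.

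This is exactly why the paper does not route Theorem \ref{sE} through joints. Its proof keeps the spectral information in the form of a minimum-degree dichotomy: if $\delta(G)>\left(1-1/r-1/r^{4}\right)n$, it invokes the non-spectral Theorem \ref{thv4} (Theorem 6 of \cite{Nik10}) via Corollary \ref{cor1}, whose minimum-degree hypothesis is essential and which yields $K_{r}^{+}\left(\lfloor c\log n\rfloor;\lceil n^{1-cr^{3}}\rceil\right)$ directly; otherwise it applies the abstract saturation Theorem \ref{t4} (with $\gamma=1/r^{4}$, $A=1-1/r$, $R=r/4$, treating $p>2$ through (\ref{inc})) to pass to a large induced subgraph $H$ that simultaneously satisfies $K_{r+1}\subset H$ (by Theorem \ref{Tur}) and the degree condition, and then applies Corollary \ref{cor1} to $H$ with $a=2c$, checking $\log k>2r^{(r+8)r}$ and $2c\log k>c\log n$. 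The heavy combinatorial lifting that you attempt with a K\H{o}v\'ari--S\'os--Tur\'an/sunflower/dependent-random-choice induction is delegated in the paper to the cited Theorem \ref{thv4}; that result genuinely needs a clique \emph{plus} a minimum-degree condition, not a clique count alone, and your sketch of its proof is in any case far from complete.
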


Let us emphasize that in Theorem \ref{sE}\ $c$ may depend on $n$, e.g., if $c$
is a slowly decaying function of $n,$ like $c=1/\log\log n,$ the conclusion is
meaningful for sufficiently large $n.$

It should be noted that the authors of \cite{KLM13}, in their Corollary 2,
prove a similar theorem, where instead of $K_{r}^{+}\left(  \left\lfloor c\log
n\right\rfloor \right)  $ they take a fixed $\left(  r+1\right)  $-color
critical subgraph. However, they claim that their statement generalizes
Theorem D as well, which is false, as the order of $K_{r}^{+}\left(
\left\lfloor c\log n\right\rfloor \right)  $ grows with $n.$ In fact the
change from a fixed $\left(  r+1\right)  $-color critical graph to $K_{r}%
^{+}\left(  \left\lfloor c\log n\right\rfloor \right)  $ is a major
difference, requiring a longer proof, with more advanced techniques and more
delicate calculations.$\medskip$

\subsection{An abstract saturation theorem}

The proofs of Theorems \ref{js} and \ref{sE}, and of several stability results
in a forthcoming paper, will be deduced from a fairly general, multiparameter
statement stated as follows.

\begin{theorem}
\label{t4}Let the numbers $p,$ $\gamma,$ $A,$ $R,$ and $n$ satisfy
\[
1<p\leq2,\ \ \text{\ }0<4\gamma<A<1,\text{ }\ \ R\geq0,\text{ \ \ and
\ \ }n>\frac{4\left(  R+1\right)  p}{\gamma\left(  p-1\right)  }A^{-p/\left(
\gamma p-\gamma\right)  }.
\]
If $G$ is a graph of order $n,$ with
\[
\lambda^{\left(  p\right)  }\left(  G\right)  n^{2/p-1}\geq An-R/n\text{
\ \ and \ \ }\delta\left(  G\right)  \leq\left(  A-\gamma\right)  n,
\]
then there exists an induced subgraph $H\subset G\ $of order $k>A^{-p/\left(
Ap-A\right)  }n,$ with
\[
\lambda^{\left(  p\right)  }\left(  H\right)  k^{2/p-1}>Ak\text{ \ \ \ and
\ \ \ }\delta\left(  H\right)  >\left(  A-\gamma\right)  k.
\]

\end{theorem}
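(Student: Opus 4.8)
The plan is to iterate a vertex-deletion argument: whenever the minimum degree condition $\delta(G)\le(A-\gamma)n$ holds, we remove a vertex of minimum degree, show that the normalized $p$-spectral radius does not drop too much, and repeat. The induced subgraph where the process stops will be the desired $H$. The first step is to quantify the loss: if $v$ is a vertex of degree $d$ in a graph $G'$ of order $m$, and $\mathbf{x}$ is an optimal (eigen)vector for $\lambda^{(p)}(G')$, then deleting $v$ and renormalizing the remaining entries gives a test vector for $G'-v$, and one checks that $\lambda^{(p)}(G'-v)\ge\bigl(\lambda^{(p)}(G')-2x_v\sum_{u\sim v}x_u\bigr)(1-|x_v|^p)^{-2/p}$. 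Bounding $2x_v\sum_{u\sim v}x_u$ by a Lagrange/Hölder-type estimate in terms of $d$ and $\lambda^{(p)}(G')$ — this is where the hypothesis $p\le2$ will be used, since for $p\le 2$ one has clean convexity/power-mean inequalities controlling $\sum_{u\sim v}x_u$ by $d^{1-1/p}$ and the norm — yields a recursion for the quantity $\lambda^{(p)}(G_m)m^{2/p-1}$ as $m$ decreases.

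Next I would set up the iteration bookkeeping. Start from $G_n=G$, and as long as the current graph $G_m$ (order $m$) satisfies $\delta(G_m)\le(A-\gamma)m$, delete a minimum-degree vertex to get $G_{m-1}$. Let $\mu_m:=\lambda^{(p)}(G_m)m^{2/p-1}$. The recursion from the previous paragraph should give something like $\mu_{m-1}\ge\mu_m-C\gamma/\,(\text{something})$ or, more precisely, an inequality showing $\mu_m$ stays above $Am-R/m$ minus an accumulated error, for all $m$ down to the stopping point $k$. The point of the quantitative hypothesis $n>\tfrac{4(R+1)p}{\gamma(p-1)}A^{-p/(\gamma p-\gamma)}$ is exactly to guarantee that the accumulated error over the (at most $n-k$) steps does not swamp the gap between $An$ and the threshold $A^{-p/(Ap-A)}n$; so I would track the telescoped sum of errors and compare it against $R/n$ plus the slack in the density. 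Crucially, the process cannot run all the way down: if it did, $G_k$ would eventually have too few vertices for $\mu_k\ge Ak-R/k$ to hold (since $\lambda^{(p)}(G_k)\le(1-1/k)k^{2-2/p}$ type bounds, or simply $\lambda^{(p)}$ of a graph on $k$ vertices is $O(k^{2-2/p})$, force $\mu_k\le k$ roughly, contradicting $\mu_k\ge Ak$ once... no — rather, one shows the density $\mu_k\ge Ak$ together with a Turán-type bound forces $k$ large), hence it halts at some $k>A^{-p/(Ap-A)}n$ with $\delta(G_k)>(A-\gamma)k$. At the halting step $H:=G_k$ satisfies both required inequalities — the degree one because the loop stopped, the spectral one because $\mu_k\ge Ak-R/k>Ak$ would need care: actually I would arrange the invariant to be $\lambda^{(p)}(G_m)m^{2/p-1}\ge Am-R_m/m$ with $R_m$ controlled, and at the end note $R_k/k$ is negligible so $\mu_k>Ak$ strictly, possibly after noting $k<n$ strictly improves the bound.

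The main obstacle I anticipate is the per-step spectral estimate: getting a bound on the drop $\mu_m-\mu_{m-1}$ that is summable to something smaller than the available slack. A naive bound loses a constant per step and accumulates to $\Theta(n)$, which is useless. The trick must be that deleting a \emph{low}-degree vertex (degree $\le(A-\gamma)m$, strictly below the "expected" degree $\approx Am$) actually \emph{helps} the normalized quantity — the renormalization factor $(1-|x_v|^p)^{-2/p}$ gains more than the eigenvalue loses, provided $x_v$ is not too small, while if $x_v$ is very small the eigenvalue loss $2x_v\sum_{u\sim v}x_u$ is itself tiny. Balancing these two regimes, with the low-degree input $d\le(A-\gamma)m$ creating a genuine surplus proportional to $\gamma$, is the delicate calculation; the constants $4\gamma<A$ and the exact form of the lower bound on $n$ are tuned precisely to make this balance close. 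I would handle it by splitting on whether $|x_v|^p\lessgtr$ some threshold like $1/m$ and optimizing, expecting the "$\gamma$ surplus beats accumulated error" inequality to reduce, after simplification, to exactly the stated bound on $n$.
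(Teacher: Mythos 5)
Your overall scheme (delete vertices while $\delta\le(A-\gamma)m$, track the normalized quantity $\mu_m=\lambda^{(p)}(G_m)m^{2/p-1}$, telescope the per-step loss, and use an absolute upper bound on $\lambda^{(p)}$ to force the process to stop while $m$ is still a positive fraction of $n$) is indeed the paper's scheme, and your stopping mechanism is essentially right (the paper needs nothing like a Tur\'an bound, only the trivial estimate $\lambda^{(p)}(F)\,k^{2/p-1}\le k-1$ for a graph $F$ of order $k$, combined with the telescoped product). But the heart of the proof -- the per-step estimate -- is exactly the point you leave open, and the mechanism you sketch for it does not work. You propose to delete a vertex $v$ of \emph{minimum degree} and to argue that ``the renormalization factor gains more than the eigenvalue loses provided $x_v$ is not too small.'' This is backwards: by the eigenequation the loss is exactly $2x_v\sum_{u\sim v}x_u=2\lambda x_v^p$, while the renormalization gain $(1-x_v^p)^{-2/p}$ contributes only about $(2/p)\lambda x_v^p$, and $2>2/p$ for all $p>1$; so the deletion always loses in absolute terms, and a large entry $x_v$ makes things worse, not better. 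What saves the iteration is only that the benchmark drops from $Am$ to $A(m-1)$, and a short computation shows that preserving $\mu_m/m$ requires the deleted entry to satisfy $x_v^p\le 1/m$, with the strict surplus coming from $x_v^p\le(1-\gamma/2)/m$. Low degree of $v$ does \emph{not} give this: the eigenequation plus H\"older only yield $x_v^p\le\delta/\lambda^{p/(p-1)}\lesssim (A-\gamma)A^{-p/(p-1)}/m$, and $(A-\gamma)A^{-p/(p-1)}$ can exceed $1$ (e.g.\ $p=2$, $A=1/2$, $\gamma=1/10$ gives $1.6$), so the minimum-degree vertex may well have an above-average entry and deleting it can strictly decrease $\mu_m/m$; accumulated over $\Theta(n)$ steps this destroys the argument.

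The missing idea, which is the paper's Lemma \ref{le0} plus Lemma \ref{le1}, is to decouple the two roles: the minimum-degree hypothesis is used only \emph{globally}, by writing the eigenequation at a minimum-degree vertex $k$, bounding $\sum_{i\in\Gamma(k)}x_i\le\delta^{1-1/p}\bigl(1-(n-\delta)\sigma\bigr)^{1/p}$ where $\sigma=\min_i x_i^p$, and comparing with $\lambda x_k^{p-1}\ge\lambda\sigma^{1-1/p}$; together with $\lambda n^{2/p-1}\ge An-R/n$ and $p\le2$ this forces the \emph{minimum entry} to satisfy $\sigma\le(1-\gamma/2)/n$. One then deletes the vertex of minimum \emph{entry} (not minimum degree), for which the loss is exactly $2\lambda\sigma$, obtaining $\lambda^{(p)}(G-u)\ge\lambda^{(p)}(G)\,\frac{1-2\sigma}{(1-\sigma)^{2/p}}$ and hence the clean multiplicative per-step bound $\mu_{m-1}/\mu_m\ge\bigl(1-\tfrac1{m-1}\bigr)^{1-(1-1/p)\gamma}$, whose exponent strictly below $1$ is the $\gamma$-surplus that your sketch hopes for; this is what telescopes against the hypothesis on $n$ to give both $k>A^{p/(\gamma p-\gamma)}n$ and $\lambda^{(p)}(H)k^{2/p-1}>Ak$. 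Without this switch of the deleted vertex (or some substitute argument controlling the entry of the deleted vertex), your ``balance the two regimes'' step has no proof, so as it stands the proposal has a genuine gap at its central estimate.
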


This theorem seems overly complicated, but its meaning and usage are
straightforward. It will be applied to prove the existence of certain
subgraphs. The starting point will be some known statement ensuring that if
$G$ is a graph of sufficiently large order $n,$ with
\[
\lambda^{\left(  p\right)  }\left(  G\right)  n^{2/p-1}>An\text{ \ \ \ and
\ \ \ }\delta\left(  G\right)  >\left(  A-\gamma\right)  n,
\]
then $G$ contains a subgraph $F.$

Now, suppose that $G$ is of sufficiently large order $n,$ but $\lambda
^{\left(  p\right)  }\left(  G\right)  n^{2/p-1}\geq An-O\left(  1\right)  $,
and $\delta\left(  G\right)  \leq\left(  A-\gamma\right)  n,$ so the
requirement for the existence of $F$ are not met at all. In this case Theorem
\ref{t4} helps to mend the situation, as it guarantees that there is an
induced subgraph $H\subset G$ of relatively large order $k,$ satisfying
\[
\lambda^{\left(  p\right)  }\left(  H\right)  k^{2/p-1}>Ak\ \ \ \text{and}%
\ \ \ \delta\left(  H\right)  >\left(  A-\gamma\right)  k.
\]
Now, if $n$ is large enough, then $k$ is large enough, and so $F\subset
H\subset G,$ as desired.

Let us note that, in any concrete case, the choice of $\gamma,$ $A$ and $R$ is
determined by the type of the subgraph $F.$\bigskip

In the remaining part of the paper we prove Theorems \ref{Tur}-\ref{t4}.

\section{Proofs}

\subsection{Notation and preliminaries}

In our proofs we shall use a number of classical inequalities: the Power Mean
inequality (PM inequality), the Bernoulli and the Maclaurin inequalities; for
more details on these inequalities we refer the reader to \cite{HLP88}%
.\medskip

For graph notation and concepts undefined here, we refer the reader to
\cite{Bol98}. In particular, given a graph $G,$ we write:\medskip

- $V\left(  G\right)  $ for the vertex set of $G$ and $v\left(  G\right)  $
for $\left\vert V\left(  G\right)  \right\vert ;$

- $E\left(  G\right)  $ for the edge set of $G$ and $e\left(  G\right)  $ for
$\left\vert E\left(  G\right)  \right\vert ;$

- $\Gamma_{G}\left(  u\right)  $ for the set of neighbors of a vertex $u$ (we
drop the subscript if $G$ is understood)$;$

- $\delta\left(  G\right)  $ for the minimum degree of $G;$

- $k_{r}\left(  G\right)  $ for the number of $r$-cliques of $G;$

- $G-u$ for the graph obtained by removing the vertex $u\in V\left(  G\right)
.\medskip$

If $G$ is a graph of order $n$ and $V\left(  G\right)  $ is not defined
explicitly, it is assumed that $V\left(  G\right)  :=\left\{  1,\ldots
,n\right\}  .$\bigskip

\subsubsection{Some facts about the $p$-spectral radius}

All required facts about the $p$-spectral radius of graphs are given below.
Additional reference material can be found in \cite{KNY14}, \cite{NikA}, and
\cite{NikB}.

Let $G$ be a graph of order $n.$ A vector $\left[  x_{i}\right]  \in
\mathbb{R}^{n}$ such that $\left\vert x_{1}\right\vert ^{p}+\cdots+\left\vert
x_{n}\right\vert ^{p}=1$ and $\lambda^{\left(  p\right)  }\left(  G\right)
=P_{G}\left(  \left[  x_{i}\right]  \right)  $ is called an \emph{eigenvector
}to $\lambda^{\left(  p\right)  }\left(  G\right)  .$ It is easy to see,that
there is always a non-negative eigenvector to $\lambda^{\left(  p\right)
}\left(  G\right)  .$ If $p>1$, by Lagrange's method, one can show that
\begin{equation}
\lambda^{\left(  p\right)  }\left(  G\right)  x_{k}^{p-1}=\sum_{i\in
\Gamma\left(  k\right)  }x_{i}.\label{eeq}%
\end{equation}
for each $k=1,\ldots,n.$ Equation (\ref{eeq}) is called the
\emph{eigenequation} of $\lambda^{\left(  p\right)  }\left(  G\right)  $ for
the vertex $k.$

In the following three bounds it is assumed that $p\geq1.$ First, by
Maclaurin's and the PM inequalities we find the absolute maximum of
$\lambda^{\left(  p\right)  }\left(  G\right)  $ with respect to $n:$
\begin{equation}
\lambda^{\left(  p\right)  }\left(  G\right)  \leq2\sum_{1\leq i<j\leq n}%
x_{i}x_{j}<\left(  n-1\right)  n\left(  \frac{1}{n}\sum_{i=1}^{n}x_{i}\right)
^{2}\leq\left(  n-1\right)  n\left(  \frac{1}{n}\sum_{i=1}^{n}x_{i}%
^{p}\right)  ^{2/p}=\frac{n-1}{n^{2/p-1}}.\label{max}%
\end{equation}
Second, we find a bound with respect to $e\left(  G\right)  :$
\begin{equation}
\lambda^{\left(  p\right)  }\left(  G\right)  \leq2e\left(  G\right)
^{1-1/p}\left(  \sum_{1\leq i<j\leq n}x_{i}^{p}x_{j}^{p}\right)  ^{1/p}%
\leq2e\left(  G\right)  ^{1-1/p}\left(  \frac{n-1}{2n}\right)  ^{1/p}%
\leq\left(  2e\left(  G\right)  \right)  ^{1-1/p}.\label{me}%
\end{equation}
In the other direction, taking the $n$-vector $\mathbf{x}=\left(
n^{-1/p},\ldots,n^{-1/p}\right)  ,$ we obtain a useful lower bound%
\begin{equation}
\lambda^{\left(  p\right)  }\left(  G\right)  \geq P_{G}\left(  \mathbf{x}%
\right)  =2e\left(  G\right)  n^{-2/p}.\label{md}%
\end{equation}
Note that if $1\leq p<2,$ then bound (\ref{md}) may not be tight for some
regular graphs, but for $p\geq2$ it is always tight for regular graphs; in
fact, as mentioned earlier,%
\[
\lim_{p\rightarrow\infty}\lambda^{\left(  p\right)  }\left(  G\right)
n^{2/p}=\lim_{p\rightarrow\infty}\lambda^{\left(  p\right)  }\left(  G\right)
=2e\left(  G\right)  .
\]

It is worth noting that using the PM inequality, one can find that
$\lambda^{\left(  p\right)  }\left(  G\right)  n^{2/p}$ is nonincreasing in
$p,$ that is to say, if $p>q\geq1,$ then
\begin{equation}
\lambda^{\left(  q\right)  }\left(  G\right)  n^{2/q}\geq\lambda^{\left(
p\right)  }\left(  G\right)  n^{2/p}. \label{inc}%
\end{equation}

\subsection{Proof of Theorem \ref{Tur}}

Since a statement similar to Theorem \ref{Tur} has been claimed in
\cite{KLM13}, Corollary 2, we need to make a comment here. The proof given
below reduces Theorem \ref{Tur} to $r$-partite graphs, for which we already
gave an independent proof in \cite{KNY14}. The same reduction, albeit more
complicated, has been carried out in \cite{KLM13} as well, but these authors
provide no proof for $r$-partite graphs, so their proof of Theorem \ref{Tur}
is essentially incomplete. Unfortunately, this omission is not negligible, as
the proof for $r$-partite graphs is much longer and more involved than the
reduction of Theorem \ref{Tur} to $r$-partite graphs.

Next, we state the main ingredient of our proof, which is a particular
instance of a result in \cite{KNY14} about the $p$-spectral radius of
$k$-partite uniform hypergraphs.

\begin{theorem}
\label{th1}Let $r\geq2,$ and $p>1.$ If $G\ $is an $r$-partite graph of order
$n,$ then
\[
\lambda^{\left(  p\right)  }\left(  G\right)  <\lambda^{\left(  p\right)
}\left(  T_{r}\left(  n\right)  \right)  ,
\]
unless $G=T_{r}\left(  n\right)  .$
\end{theorem}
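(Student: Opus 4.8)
The plan is to argue by contradiction: suppose $G$ is an $r$-partite graph of order $n$ with $\lambda^{(p)}(G) \ge \lambda^{(p)}(T_r(n))$ and $G \ne T_r(n)$; I would choose such a $G$ with the maximum number of edges among all counterexamples (on the fixed vertex set). The first observation is that $G$ must be a \emph{complete} $r$-partite graph: if some part has two nonadjacent vertices in different parts that could be joined without destroying $r$-partiteness, adding that edge strictly increases $P_G(\mathbf x)$ on any nonnegative eigenvector (and keeps the graph $r$-partite), hence strictly increases $\lambda^{(p)}$, contradicting maximality. So it suffices to compare $\lambda^{(p)}$ over all complete $r$-partite graphs of order $n$ and show $T_r(n)$ is the unique maximizer.

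Now fix a partition $V = V_1 \cup \cdots \cup V_r$ with $|V_i| = n_i$, $\sum n_i = n$, and let $G$ be the corresponding complete $r$-partite graph. Let $\mathbf x$ be a nonnegative eigenvector for $\lambda := \lambda^{(p)}(G)$. The key structural fact is that within each part all eigenvector entries are equal: by the eigenequation~\eqref{eeq}, $\lambda x_k^{p-1} = \sum_{i \notin V_j} x_i$ for every $k \in V_j$, and the right-hand side depends only on the part $V_j$, not on $k$; since $t \mapsto t^{p-1}$ is strictly increasing for $p>1$, all $x_k$ with $k \in V_j$ share a common value, say $y_j \ge 0$. Writing $n_j y_j^p = \beta_j$, the optimization collapses to
\[
\lambda^{(p)}(G) = \max\Bigl\{ 2\sum_{1\le i<j\le r} n_i n_j y_i y_j : \sum_{j=1}^r n_j y_j^p = 1,\ y_j \ge 0 \Bigr\},
\]
a finite-dimensional problem in $y_1,\dots,y_r$ (equivalently in $\beta_1,\dots,\beta_r$) depending on the part sizes only. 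I would then show that, for fixed $n$ and $r$, this quantity is maximized precisely when the $n_j$ are as equal as possible, i.e.\ $|n_i - n_j| \le 1$ for all $i,j$, which is exactly $T_r(n)$.

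The main obstacle is this last monotonicity-in-the-partition step. The clean way to handle it is a \emph{shifting} argument: suppose $n_1 \ge n_2 + 2$; I want to show that moving one vertex from $V_1$ to $V_2$ (producing sizes $n_1 - 1, n_2 + 1$, rest unchanged) strictly increases $\lambda^{(p)}$. Take an optimal eigenvector $\mathbf y = (y_1,\dots,y_r)$ for the new, more balanced graph $G'$ and use it — after renormalizing its $\ell_p$-mass, which only involves the two changed parts — as a \emph{test} vector in the old graph $G$; comparing $P_G$ against $P_{G'}$ on this common profile and exploiting that the ``cross'' contribution $2 n_i n_j y_i y_j$ is a concave-type function of the part sizes when the total is fixed should yield $\lambda^{(p)}(G) < \lambda^{(p)}(G')$, with equality impossible because $p>1$ makes the relevant inequalities strict. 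Iterating the shift drives any complete $r$-partite graph strictly below $T_r(n)$ unless it already equals $T_r(n)$, which completes the proof. (The honest accounting of the renormalization constant and the strictness of the PM / Bernoulli estimates invoked there is where the real work lies, and is presumably why the authors cite the detailed treatment from~\cite{KNY14}.)
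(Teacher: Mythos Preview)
The paper does not give its own proof of Theorem~\ref{th1}: the result is imported from~\cite{KNY14}, and the authors explicitly remark that ``the proof for $r$-partite graphs is much longer and more involved than the reduction of Theorem~\ref{Tur} to $r$-partite graphs.'' So there is nothing in this paper to compare your argument against; what is proved here is only the complementary step (the reduction Lemma following Theorem~\ref{th1}) from $K_{r+1}$-free to $r$-partite. Your outline --- pass to complete $r$-partite graphs by edge-monotonicity, use the eigenequation~\eqref{eeq} to see that a nonnegative eigenvector is constant on each part, collapse to a finite optimization in $(y_1,\dots,y_r)$, and then balance the partition by a vertex shift --- is the natural strategy and is in the spirit of what~\cite{KNY14} carries out.

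There is one concrete slip in the sketch, and one honest gap you already flag. The slip: in the shifting step your test vector goes the wrong way. Plugging the eigenvector of the \emph{more} balanced $G'$ into $P_G$ yields only a lower bound on $\lambda^{(p)}(G)$, which cannot give $\lambda^{(p)}(G)<\lambda^{(p)}(G')$; you must instead start from the eigenvector of the \emph{less} balanced $G$, reassign one coordinate from the $y_1$-block to the $y_2$-block, renormalize in $\ell_p$, and evaluate in $P_{G'}$. The gap: the sentence ``$2n_in_jy_iy_j$ is a concave-type function of the part sizes'' does not do the job, because after the shift both the quadratic form and the $\ell_p$-constraint change simultaneously, and the optimal values $y_j$ for $G$ already depend on $(n_1,\dots,n_r)$ via the eigenequations (one needs, for instance, that $n_1>n_2$ forces $y_1<y_2$). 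Turning this into a strict inequality is a real computation rather than a one-line concavity remark --- which is precisely why the authors defer it to~\cite{KNY14}.
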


Thus, to prove Theorem \ref{Tur}, all we need is that the maximum
$\lambda^{\left(  p\right)  }\left(  G\right)  $ of a $K_{r+1}$-free graph $G$
of order $n$ is attained on an $r$-partite graph. Reductions of this kind have
been pioneered by Zykov \cite{Zyk49} and Erd\H{o}s, but to spectral problems
they have been first applied by Guiduli, in an unpublished proof of the
spectral Tur\'{a}n theorem. Another noteworthy application of the same
techniques is for the spectral radius of the signless Laplacian of $K_{r+1}%
$-free graphs in \cite{HJZ11}. Thus we proceed with a reduction lemma for
$\lambda^{\left(  p\right)  }\left(  G\right)  $ of a $K_{r+1}$-free graph $G$.

\begin{lemma}
Let $p\geq1.$ If $G$ is a $K_{r+1}$-free graph of order $n,$ then there exists
an $r$-partite graph $H$ of order $n$ such that $\lambda^{\left(  p\right)
}\left(  H\right)  \geq\lambda^{\left(  p\right)  }\left(  G\right)  .$
\end{lemma}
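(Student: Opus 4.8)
The plan is to run a Zykov-type symmetrization, reducing to the $r$-partite case handled by Theorem~\ref{th1}. If $G$ has no edge it is $r$-partite and there is nothing to prove, so assume $\lambda^{(p)}(G)>0$; I also assume $p>1$, since for $p=1$ the Motzkin--Straus identity $\lambda^{(1)}(G)=1-1/\omega(G)$ together with $\omega(G)\le r$ and (\ref{l1}) lets us take $H=T_r(n)$. For $p>1$, among all $K_{r+1}$-free graphs on the vertex set $\{1,\dots ,n\}$ --- a finite, nonempty family containing $G$ --- I choose $H$ so that, in order of priority, $\lambda^{(p)}(H)$ is maximum, then $e(H)$ is maximum, then the number of \emph{twin pairs} $\{i,j\}$ (distinct vertices with $\Gamma(i)=\Gamma(j)$, which are automatically nonadjacent) is maximum. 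Then $\lambda^{(p)}(H)\ge\lambda^{(p)}(G)$, and the aim is to show $H$ is complete multipartite: being $K_{r+1}$-free it then has at most $r$ parts, hence is $r$-partite, and the lemma follows.

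The main tool is the \emph{duplication move}: for nonadjacent $a,b$, let $H_{a\to b}$ be $H$ with the neighborhood of $b$ reset to $\Gamma(a)$, so $a,b$ become nonadjacent twins. Fixing a nonnegative eigenvector $\mathbf x$ of $H$, with $\lambda=\lambda^{(p)}(H)>0$ and $\sigma_t=\sum_{i\in\Gamma(t)}x_i=\lambda x_t^{p-1}$ by the eigenequation (\ref{eeq}), one checks three elementary facts: $H_{a\to b}$ is still $K_{r+1}$-free (a clique through $b$ yields one of the same size through $a$); only the edges at $b$ change, so $P_{H_{a\to b}}(\mathbf x)=\lambda+2x_b(\sigma_a-\sigma_b)$; and $e(H_{a\to b})=e(H)+\deg(a)-\deg(b)$. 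Hence if $\sigma_a\ge\sigma_b$ then $H_{a\to b}$ is again in the family, so $\lambda^{(p)}(H_{a\to b})=\lambda$, whence $\deg(a)\le\deg(b)$ and --- the one count that needs care --- the number of twin pairs does not increase.

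Now suppose $H$ is not complete multipartite, and fix distinct $u,v,w$ with $uv,vw\notin E(H)$, $uw\in E(H)$. First rule out zero weights on these vertices: if $x_u=0$ or $x_w=0$ then $uw$ has weight $0$ and deleting it contradicts the maximality of $e(H)$; if $x_v=0$ then deleting all (weight-zero) edges at $v$ forces $v$ isolated, after which duplicating a positive-weight, positive-degree vertex onto $v$ strictly increases $e(H)$ --- a contradiction. So $\sigma_u,\sigma_v,\sigma_w>0$. If $\sigma_u>\sigma_v$ then $P_{H_{u\to v}}(\mathbf x)>\lambda$ while $H_{u\to v}$ is $K_{r+1}$-free, contradicting the maximality of $\lambda^{(p)}$; arguing likewise, $\sigma_u=\sigma_v=\sigma_w$, and then $H_{u\to v}$, $H_{v\to u}$ preserve $\lambda^{(p)}$, so $\deg(u)=\deg(v)=\deg(w)$ by edge-maximality. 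Since $w\in\Gamma(u)\setminus\Gamma(v)$, the vertices $u,v$ lie in distinct twin classes, of sizes $\alpha,\beta$ say; passing to $H_{u\to v}$ moves $v$ from its class into $u$'s, and --- checking that no twin pair avoiding $v$ is destroyed, because vertices with equal neighborhoods keep equal neighborhoods under the move --- the number of twin pairs changes by at least $\alpha-(\beta-1)$, while $H_{v\to u}$ changes it by at least $\beta-(\alpha-1)$. One of these is positive, contradicting twin-pair maximality. So no such triple exists, $H$ is complete multipartite, and we are done. The only genuinely delicate point is this last twin-pair bookkeeping; the three-tier extremal choice is needed precisely because the duplication move is only a \emph{weak} monovariant for $\lambda^{(p)}$, and each of the three maximalities is called upon, in turn, to close the cases $\sigma_u\ne\sigma_v$, $\deg(u)\ne\deg(v)$, and $\alpha\ne\beta$.
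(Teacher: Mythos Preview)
Your extremal Zykov-symmetrization strategy is genuinely different from the paper's proof, which is constructive: the paper fixes a nonnegative eigenvector $\mathbf x$ of $G$ itself and, by induction on $r$, builds a complete $r$-partite $H$ on $V(G)$ satisfying $D_H(v,\mathbf x)\ge D_G(v,\mathbf x)$ for every $v$ (take $u$ of maximum weighted degree, recurse on the $K_r$-free graph $G[\Gamma(u)]$, and attach $V\setminus\Gamma(u)$ as one further part). Summing $x_v D_H(v,\mathbf x)$ then gives $\lambda^{(p)}(H)\ge P_H(\mathbf x)\ge P_G(\mathbf x)=\lambda^{(p)}(G)$. This construction is short, works uniformly for all $p\ge1$, and never needs the eigenvector to be positive---which is exactly where your argument breaks.

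Your handling of zero entries is incorrect. The claim ``if $x_u=0$ or $x_w=0$ then $uw$ has weight $0$ and deleting it contradicts the maximality of $e(H)$'' is backwards: deleting an edge \emph{lowers} $e(H)$, which is perfectly consistent with $e(H)$ being maximal among graphs with $\lambda^{(p)}=\lambda$. The $x_v=0$ step is likewise unjustified: after isolating $v$ and duplicating a positive-weight $a$ onto it you obtain a graph with $e(H)-\deg_H(v)+\deg_H(a)$ edges, and nothing you wrote guarantees $\deg_H(a)>\deg_H(v)$. The repair is easy once seen, and uses your second tier in the right direction. Observe that $H$ must be \emph{edge-maximal} $K_{r+1}$-free: if some non-edge $ij$ could be added without creating a $K_{r+1}$, then $H+ij$ is still $K_{r+1}$-free, has $\lambda^{(p)}(H+ij)\ge\lambda^{(p)}(H)$ (adding edges cannot decrease $\lambda^{(p)}$), hence equality by the first tier, and has strictly more edges---contradicting the second tier. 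Now suppose $x_z=0$ for some $z$. The eigenequation (\ref{eeq}) gives $\sum_{i\in\Gamma(z)}x_i=0$, so $\Gamma(z)\subset Z:=\{i:x_i=0\}$, and iterating, there are no edges between $Z$ and $\overline Z$. Pick any $a$ with $x_a>0$; then $\Gamma(a)\cap\Gamma(z)\subset\overline Z\cap Z=\emptyset$, so adding the edge $az$ creates no $K_3$, let alone a $K_{r+1}$, contradicting edge-maximality. Hence $\mathbf x>0$, and from that point your three-tier symmetrization argument is correct as written.
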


\begin{proof}
Let $\mathbf{x}=\left[  x_{i}\right]  $ be a nonnegative eigenvector to
$\lambda^{\left(  p\right)  }\left(  G\right)  .$ For each $v\in V\left(
G\right)  ,$ set
\[
D_{G}\left(  v,\mathbf{x}\right)  :=%
{\displaystyle\sum\limits_{i\in\Gamma_{G}\left(  v\right)  }}
x_{i}.
\]
We shall prove that there exists a complete $r$-partite graph $H$ such that
$V\left(  H\right)  =V\left(  G\right)  $ and $D_{H}\left(  v,\mathbf{x}%
\right)  \geq D_{G}\left(  v,\mathbf{x}\right)  $ for any $v\in V\left(
G\right)  .$ This proof will be carried out by induction on $r$. Let $u\in
V\left(  G\right)  $ satisfy
\[
D_{G}\left(  u,\mathbf{x}\right)  :=\max\left\{  D_{G}\left(  v,\mathbf{x}%
\right)  :v\in V\left(  G\right)  \right\}  ,
\]
and set $U:=\Gamma_{G}\left(  u\right)  $ and $W:=V\left(  G\right)
\backslash\Gamma_{G}\left(  u\right)  .$ To start the induction let $r:=2;$
hence $G$ is triangle-free, and so $e\left(  G\left[  U\right]  \right)  =0.$
We shall show that the complete bipartite graph $H$ with bipartition $V\left(
H\right)  =U\cup W$ is as required. Indeed, if $v\in U,$ then $\Gamma
_{G}\left(  v\right)  \subset W,$ and so
\[
D_{H}\left(  v,\mathbf{x}\right)  =%
{\displaystyle\sum\limits_{i\in W}}
x_{i}\geq%
{\displaystyle\sum\limits_{i\in\Gamma_{G}\left(  v\right)  }}
x_{i}=D_{G}\left(  v,\mathbf{x}\right)  .
\]
On the other hand, if $v\in W,$ then $D_{H}\left(  v,\mathbf{x}\right)
=D_{G}\left(  u,\mathbf{x}\right)  \geq D_{G}\left(  v,\mathbf{x}\right)  .$
Hence the graph $H$ is as required.

Now, let $r>2$ and assume that the assertion is true for $r^{\prime}$ whenever
$2\leq r^{\prime}<r$. First note that $G\left[  U\right]  $ is a $K_{r}$-free
graph; hence, by the induction assumption there exists a complete $\left(
r-1\right)  $-partite graph $F$ with $V\left(  F\right)  =U$ and $D_{F}\left(
v,\mathbf{x}\right)  \geq D_{G\left[  U\right]  }\left(  v,\mathbf{x}\right)
$ for any vertex $v\in U.$ Let $V\left(  F\right)  =V_{1}\cup\cdots\cup
V_{r-1}$ be the partition of $V\left(  F\right)  $ into independent sets and
let $H$ be the complete $r$-partite graph with partition
\[
V\left(  H\right)  =V_{1}\cup\cdots\cup V_{r-1}\cup W=V\left(  G\right)  .
\]
We shall prove that $H$ is as required. Indeed, on the one hand, if $v\in U,$
then
\[
D_{H}\left(  v,\mathbf{x}\right)  =D_{F}\left(  v,\mathbf{x}\right)  +%
{\displaystyle\sum\limits_{i\in W}}
x_{i}\geq D_{G\left[  U\right]  }\left(  v,\mathbf{x}\right)  +%
{\displaystyle\sum\limits_{i\in\Gamma_{G}\left(  v\right)  \cap W}}
x_{i}=D_{G}\left(  v,\mathbf{x}\right)  .
\]
On the other hand, if $v\in W,$ then $D_{H}\left(  v,\mathbf{x}\right)
=D_{G}\left(  u,\mathbf{x}\right)  \geq D_{G}\left(  v,\mathbf{x}\right)  .$
Hence, $H$ is a complete $r$-partite graph such that $D_{H}\left(  v\right)
\geq D_{G}\left(  v\right)  $ for any $v\in V\left(  G\right)  .$ This
completes the induction step, and the existence of $H$ is proved$.$

To finish the proof of the lemma, note that
\begin{align*}
\lambda^{\left(  p\right)  }\left(  H\right)   &  \geq2%
{\displaystyle\sum\limits_{\left\{  i,j\right\}  \in E\left(  H\right)  }}
x_{i}x_{j}=%
{\displaystyle\sum\limits_{i\in V\left(  H\right)  }}
x_{i}D_{H}\left(  i,\mathbf{x}\right)  \geq%
{\displaystyle\sum\limits_{i\in V\left(  H\right)  }}
x_{i}D_{G}\left(  i,\mathbf{x}\right)  =2%
{\displaystyle\sum\limits_{\left\{  i,j\right\}  \in E\left(  G\right)  }}
x_{i}x_{j}\\
&  =\lambda^{\left(  p\right)  }\left(  G\right)  .
\end{align*}

\end{proof}

\bigskip

\subsection{Proofs of Theorems \ref{Turc} and \ref{Turc1}}

We use below the result of Motzkin and Straus \cite{MoSt65}, that can be
stated as:\medskip

\textbf{Theorem E}\emph{ If }$G$\emph{ is a }$K_{r+1}$\emph{-free graph of
order }$n,$\emph{ and }$x_{1},\ldots,x_{n}$\emph{ are nonnegative numbers such
that }$x_{1}+\cdots+x_{n}=1,$\emph{ then}%
\begin{equation}
2\sum_{\left\{  i,j\right\}  \in E\left(  G\right)  }x_{i}x_{j}\leq1-\frac
{1}{r}. \label{MS}%
\end{equation}

The conditions for equality in (\ref{MS}) are well known, but we shall omit
them. Instead we just note that if $K_{r}\subset G,$ one may choose a vector
$\left(  x_{1},\ldots,x_{n}\right)  $ so that equality holds in (\ref{MS}).

We often shall use the following bound on the number of edges of the Tur\'{a}n
graph $T_{r}\left(  n\right)  ,$%
\begin{equation}
2e\left(  T_{r}\left(  n\right)  \right)  \geq\left(  1-\frac{1}{r}\right)
n^{2}-\frac{r}{4}. \label{te}%
\end{equation}
Indeed, let $n=rs+t,$ where $s$ and $t$ are nonnegative integers and $0\leq
t<s.$ It is known that
\[
e\left(  T_{r}\left(  n\right)  \right)  =\binom{r}{2}\frac{n^{2}-t^{2}}%
{r^{2}}+\binom{t}{2};
\]
hence,
\[
2e\left(  T_{r}\left(  n\right)  \right)  =2\binom{r}{2}\frac{n^{2}-t^{2}%
}{r^{2}}+2\binom{t}{2}=\left(  1-\frac{1}{r}\right)  n^{2}-\frac{t\left(
r-t\right)  }{r}\geq\left(  1-\frac{1}{r}\right)  n^{2}-\frac{r}{4}.
\]
\medskip

\begin{proof}
[\textbf{Proof of Theorem \ref{Turc}}]The proof of inequality (\ref{in0}) has
been given many times, but it is short, so for reader's sake we shall give it
again. Let $\left[  x_{i}\right]  $ be a nonnegative eigenvector to
$\lambda^{\left(  p\right)  }\left(  G\right)  .$ The PM inequality implies
that
\begin{equation}
\lambda^{\left(  p\right)  }\left(  G\right)  =2\sum_{\left\{  i,j\right\}
\in E\left(  G\right)  }x_{i}x_{j}\leq2e\left(  G\right)  ^{1-1/p}\left(
\sum_{\left\{  i,j\right\}  \in E\left(  G\right)  }x_{i}^{p}x_{j}^{p}\right)
^{1/p}. \label{in2}%
\end{equation}
Note that $x_{1}^{p}+\cdots+x_{n}^{p}=1,$ and $G$ is $K_{r+1}$-free, so the
Motzkin-Straus result implies that
\[
\sum_{\left\{  i,j\right\}  \in E\left(  G\right)  }x_{i}^{p}x_{j}^{p}%
\leq\frac{r-1}{2r}.
\]
Plugging this back in (\ref{in2}), we obtain (\ref{in0}).

To prove (\ref{in1}), we use (\ref{in0}) and the concise Tur\'{a}n theorem,
which implies that
\[
2e\left(  G\right)  \leq\left(  1-\frac{1}{r}\right)  n^{2}.
\]
Now, if equality holds, i.e., if
\[
\lambda^{\left(  p\right)  }\left(  G\right)  =\left(  1-\frac{1}{r}\right)
n^{2-2/p},
\]
then we should have
\[
2e\left(  G\right)  =\left(  1-\frac{1}{r}\right)  n^{2},
\]
and this can happen only if $r|n$ and $G=T_{r}\left(  n\right)  .$
\end{proof}

\bigskip

\begin{proof}
[\textbf{Proof of Theorem \ref{Turc1}}]Equality (\ref{l1}) follows from the
Motzkin-Straus' Theorem E and the fact the $K_{r}\subset T_{r}\left(
n\right)  .$ The lower bound in (\ref{le}) follows by (\ref{md}). To prove the
upper bound in (\ref{le}) let us start with
\[
\lambda^{\left(  p\right)  }\left(  T_{r}\left(  n\right)  \right)
\leq\left(  1-\frac{1}{r}\right)  ^{1/p}\left(  2e\left(  T_{r}\left(
n\right)  \right)  \right)  ^{1-1/p}.
\]
Next, using (\ref{te}) and Bernoulli's inequality, we get the estimate
\[
2e\left(  T_{r}\left(  n\right)  \right)  >\left(  1-\frac{1}{r}\right)
n^{2}-\frac{r}{4}>\frac{\left(  1-\frac{1}{r}\right)  n^{2}}{1+\frac{r}{n^{2}%
}}>\frac{\left(  1-\frac{1}{r}\right)  n^{2}}{\left(  1+\frac{r}{pn^{2}%
}\right)  ^{p}}.
\]
This implies%
\[
\frac{\left(  2e\left(  T_{r}\left(  n\right)  \right)  \right)  ^{1/p}%
}{n^{2/p}}\left(  1+\frac{r}{pn^{2}}\right)  >\left(  1-\frac{1}{r}\right)
^{1/p},
\]
and the upper bound in (\ref{le}) follows.

The bound in (\ref{lv}) comes from (\ref{md}) and (\ref{me}).
\end{proof}

\bigskip

\subsection{Proof of Theorem \ref{t4}}

The proof of Theorem \ref{t4} goes along lines, which are familiar from
Theorem 5 in \cite{Nik08}, but the arguments and calculations are more
complicated. To clarify the structure of the proof we have extracted two of
its essential points into Lemmas \ref{le0} and \ref{le1}.

\begin{lemma}
\label{le0}Let the numbers $p,$ $A,$ $\gamma,$ $R,$ and $n$ satisfy
\[
1<p\leq2,\ \ \text{\ }0<\gamma<A<1,\text{ }\ \ R\geq0,\text{ \ \ and
\ \ \ }n\geq4R/\gamma.
\]
Let $G$ be a graph of order $n,$ with
\[
\lambda^{\left(  p\right)  }\left(  G\right)  n^{2/p-1}>An-\frac{R}{n}\text{
\ \ \ and \ \ \ }\delta\left(  G\right)  <\left(  A-\gamma\right)  n.
\]
If $\left[  x_{i}\right]  $ is a nonnegative eigenvector to $\lambda^{\left(
p\right)  }\left(  G\right)  ,$ then the value $\sigma:=\min\left\{  x_{1}%
^{p},\ldots,x_{n}^{p}\right\}  $ satisfies%
\begin{equation}
\sigma\leq\frac{1-\gamma/2}{n}. \label{s}%
\end{equation}

\end{lemma}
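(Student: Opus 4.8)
The plan is to argue by contradiction: assume $\sigma > (1-\gamma/2)/n$, so that every coordinate of the eigenvector satisfies $x_i^p > (1-\gamma/2)/n$. Let $w$ be a vertex of minimum degree, so $\deg(w) = \delta(G) < (A-\gamma)n$. The strategy is to estimate $\lambda^{(p)}(G)$ from above by splitting the quadratic form $P_G([x_i]) = \sum_i x_i D_G(i,\mathbf{x})$ and using the eigenequation together with the lower bound on $\sigma$ to control how large this can be; comparing with the hypothesis $\lambda^{(p)}(G)n^{2/p-1} > An - R/n$ should force the contradiction.

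First I would record the consequences of the assumption $\sigma > (1-\gamma/2)/n$. Since $\sum_i x_i^p = 1$ and each term exceeds $(1-\gamma/2)/n$, the coordinates are all roughly of size $n^{-1/p}$; more precisely $x_i^p < 1 - (n-1)(1-\gamma/2)/n$, which for large $n$ is close to $\gamma/2$, so in fact each $x_i^p$ lies in a narrow band around $1/n$. This near-uniformity is the crux: it means $\lambda^{(p)}(G) = \sum_i x_i D_G(i,\mathbf{x})$ behaves almost like $2e(G)\cdot n^{-2/p}$ up to controlled error, and in particular the contribution of any single vertex $w$ to the form is comparable to $x_w \cdot \deg(w) \cdot n^{-1/p}$. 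The second step is to exploit the small degree of $w$: intuitively, a graph achieving $\lambda^{(p)}(G)n^{2/p-1}$ close to $An$ cannot have the minimum-degree vertex carry (relative to a uniform vector) much less than an $A$-fraction of $n$ neighbors. Quantitatively, I would compare $P_G([x_i])$ with the value obtained by deleting $w$, or use the eigenequation $\lambda^{(p)}(G)x_w^{p-1} = \sum_{i\in\Gamma(w)} x_i \le \deg(w)\cdot \max_i x_i$; combined with $x_w^{p-1} > ((1-\gamma/2)/n)^{(p-1)/p}$ and $\max_i x_i \le (1-\cdots)^{1/p}$, this yields an upper bound on $\lambda^{(p)}(G)$ of the shape $(A-\gamma+o(1))n^{2/p-1}\cdot n$ or similar. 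The condition $n \ge 4R/\gamma$ is exactly what is needed to absorb the $R/n$ term: it guarantees $R/n \le \gamma n /4 \cdot n^{2/p-2}$ (or the appropriate scaling), so the gap $An - (A-\gamma)n = \gamma n$ dominates all error terms.

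The main obstacle I anticipate is organizing the error estimates so that the $\gamma$-gap in the degree hypothesis cleanly outruns the slack in the near-uniformity of $[x_i]$, the $R/n$ term, and the difference between $P_G([x_i])$ of the various auxiliary vectors — all of which scale like $n^{2/p-1}$ after the normalization, but with constants that must be tracked carefully since $p$ ranges over $(1,2]$ and the map $t\mapsto t^{1/p}$ or $t\mapsto t^{(p-1)/p}$ must be linearized via Bernoulli's inequality. The factor $1-\gamma/2$ (rather than, say, $1-\gamma$) in the target bound $\sigma \le (1-\gamma/2)/n$ is presumably chosen precisely to leave a $\gamma/2$ cushion on each side of the argument; I would expect the final chain of inequalities to read, schematically, $An - R/n < \lambda^{(p)}(G)n^{2/p-1} \le (A - \gamma)n + \tfrac{\gamma}{2}n + \tfrac{\gamma}{4}n < An$, a contradiction. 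Getting every one of those middle estimates to come out with the right constant, using only the PM, Bernoulli, and Maclaurin inequalities and the eigenequation, is where the real work lies.
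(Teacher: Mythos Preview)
Your proposal has a genuine gap in the central estimate. You assume $\sigma>(1-\gamma/2)/n$ and infer ``near-uniformity'' of the eigenvector; but a lower bound on every $x_i^p$ does \emph{not} yield a useful upper bound on $\max_i x_i^p$. From $\sum_i x_i^p=1$ and $x_i^p>(1-\gamma/2)/n$ you only get
\[
\max_i x_i^p \;<\; 1-(n-1)\frac{1-\gamma/2}{n}\;=\;\frac{\gamma}{2}+\frac{1-\gamma/2}{n},
\]
which is of order the \emph{constant} $\gamma/2$, not $1/n$. (All the ``excess mass'' $\gamma/2$ could sit on a single coordinate.) Consequently the step $\sum_{i\in\Gamma(w)}x_i\le \deg(w)\cdot\max_i x_i$ gives, after dividing by $x_w^{p-1}\ge((1-\gamma/2)/n)^{(p-1)/p}$, an upper bound for $\lambda^{(p)}(G)n^{2/p-1}$ of order $(A-\gamma)(\gamma/2)^{1/p}n^{1+1/p}$ rather than $(A-\gamma)n$; no contradiction with $An-R/n$ follows. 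The schematic chain you sketch at the end cannot be realized with this bound.

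The paper avoids this entirely by arguing \emph{directly} and by never looking at $\max_i x_i$. It applies the eigenequation at a vertex $k$ of minimum degree, bounds the right side via the PM inequality
\[
\lambda\,\sigma^{1-1/p}\le \lambda x_k^{p-1}=\sum_{i\in\Gamma(k)}x_i\le \delta^{1-1/p}\Bigl(\sum_{i\in\Gamma(k)}x_i^p\Bigr)^{1/p}\le \delta^{1-1/p}\bigl(1-(n-\delta)\sigma\bigr)^{1/p},
\]
the last step using $\sum_{i\notin\Gamma(k)}x_i^p\ge(n-\delta)\sigma$. Raising to the $p$-th power gives $\dfrac{\lambda^p\sigma^{p-1}}{\delta^{p-1}}+(n-\delta)\sigma\le 1$. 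The crucial move --- and where the hypothesis $p\le 2$ is actually used --- is to show $\dfrac{\lambda^p\sigma^{p-1}}{\delta^{p-1}}\ge \lambda n^{2/p-1}\sigma$ via $\lambda n^{2/p-1}\ge\delta$ and $\sigma^{p-2}\ge n^{2-p}$. This linearizes the inequality to $\bigl(\lambda n^{2/p-1}+(n-\delta)\bigr)\sigma\le 1$, whence $\sigma\le\bigl((1+\gamma)n-R/n\bigr)^{-1}$, and a short calculation with $n\ge 4R/\gamma$ finishes. The key idea you are missing is to control $\sum_{i\in\Gamma(k)}x_i^p$ (rather than $\max_i x_i$) by the complement bound $1-(n-\delta)\sigma$.
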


\begin{proof}
Let $k\in V\left(  G\right)  $ be a vertex of degree $\delta=\delta\left(
G\right)  $ and set for short $\lambda^{\left(  p\right)  }\left(  G\right)
=\lambda$. Applying the PM inequality to the right side of the eigenequation
for $x_{k},$ we get
\begin{align*}
\lambda\sigma^{1-1/p}  &  \leq\lambda x_{k}^{p-1}=\sum_{i\in\Gamma\left(
k\right)  }x_{i}\leq\delta^{1-1/p}\left(  \sum_{i\in\Gamma\left(  k\right)
}x_{i}^{p}\right)  ^{1/p}=\delta^{1-1/p}\left(  1-\sum_{i\notin\Gamma\left(
k\right)  }x_{i}^{p}\right)  ^{1/p}\\
&  \leq\delta^{1-1/p}\left(  1-\left(  n-\delta\right)  \sigma\right)  ^{1/p}.
\end{align*}
After some algebra, this inequality reduces to%
\[
\frac{\lambda^{p}\sigma^{p-1}}{\delta^{p-1}}+\left(  n-\delta\right)
\sigma\leq1.
\]
In view of (\ref{md}) $\lambda n^{2/p-1}\geq2e\left(  G\right)  /n\geq\delta;$
hence
\[
\left(  \frac{\lambda n^{2/p-1}}{\delta}\right)  ^{p-1}\geq1,
\]
and so,
\[
\frac{\lambda^{p-1}}{\delta^{p-1}}>n^{\left(  1-2/p\right)  \left(
p-1\right)  }.
\]
Also $\sigma\leq1/n$ and since $p-2\leq0,$ we see that $\sigma^{p-2}\geq
n^{2-p}.$ Therefore,%
\[
\frac{\lambda^{p}\sigma^{p-1}}{\delta^{p-1}}\geq\lambda n^{\left(
1-2/p\right)  \left(  p-1\right)  -p+2}\sigma=\lambda n^{2/p-1}\sigma,
\]
yielding finally
\[
\lambda n^{2/p-1}\sigma+\left(  n-\delta\right)  \sigma\leq1.
\]
Now, plugging the bounds on $\delta\left(  G\right)  $ and $\lambda^{\left(
p\right)  }\left(  G\right)  n^{2/p-1},$ we get
\[
\sigma\leq\frac{1}{\left(  A-\frac{R}{n}\right)  n+n-\left(  A-\gamma\right)
n}=\frac{1}{\left(  1+\gamma\right)  n-\frac{R}{n}}.
\]
To complete the proof of the lemma we shall check that
\[
\frac{1}{1+\gamma-R/n^{2}}<1-\frac{\gamma}{2}.
\]
Indeed,
\begin{align*}
\left(  1-\frac{\gamma}{2}\right)  \left(  1+\gamma-\frac{R}{n^{2}}\right)
&  =1+\frac{\gamma}{2}\left(  1-\gamma\right)  -\frac{R}{n^{2}}\left(
1-\frac{\gamma}{2}\right)  >1+\frac{\gamma}{2}\left(  1-\gamma\right)
-\frac{R}{n}\left(  1-\frac{\gamma}{2}\right) \\
&  >1+\frac{\gamma}{2}\left(  1-\gamma\right)  -\frac{\gamma}{4}\left(
1-\frac{\gamma}{2}\right)  =1+\frac{\gamma}{2}\left(  \frac{3}{4}-\frac{7}%
{8}\gamma\right) \\
&  >1+\frac{\gamma}{2}\left(  \frac{3}{4}-\frac{7}{16}\right)  >1.
\end{align*}
Lemma \ref{le0} is proved.
\end{proof}

\bigskip

The next lemma shows that if $\lambda^{\left(  p\right)  }\left(  G\right)
n^{2/p-1}$ is large enough, but the minimum degree $\delta\left(  G\right)  $
is not too large, we can remove a vertex $u$, so that $\lambda^{\left(
p\right)  }\left(  G-u\right)  \left(  n-1\right)  ^{2/p-1}$ is also large.

\begin{lemma}
\label{le1}Let the numbers $p,$ $\gamma,$ $A,$ $R,$ and $n$ satisfy
\[
1<p\leq2,\ \ \text{\ }0<\gamma<A<1,\text{ }\ \ R\geq0,\text{ \ \ and
\ \ \ }n\geq4R/\gamma.
\]
Let $G$ be a graph of order $n,$ with
\[
\delta\left(  G\right)  \leq\left(  A-\gamma\right)  n\text{ \ \ and
\ }\lambda^{\left(  p\right)  }\left(  G\right)  n^{2/p-1}\geq An-R/n.
\]
If $\left[  x_{i}\right]  $ is a nonnegative eigenvector to $\lambda^{\left(
p\right)  }\left(  G\right)  $ and $u$ is a vertex with $x_{u}=\min\left\{
x_{1},\ldots,x_{n}\right\}  ,$ then%
\[
\lambda^{\left(  p\right)  }\left(  G-u\right)  \left(  n-1\right)
^{2/p-1}\geq\left(  \frac{n-2}{n-1}\right)  ^{1-\left(  1-1/p\right)  \gamma
}\lambda^{\left(  p\right)  }\left(  G\right)  n^{2/p-1}.
\]

\end{lemma}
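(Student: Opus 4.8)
The plan is to delete the vertex $u$ of smallest eigenvector weight, restrict and renormalise the eigenvector, and then reduce everything to a single scalar inequality; the delicate point is that this scalar inequality has slack only of order $n^{-2}$.

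First I would set $\lambda:=\lambda^{(p)}(G)$ (which is positive for $n$ large) and $\sigma:=x_u^{p}=\min\{x_1^{p},\dots,x_n^{p}\}$, and let $\mathbf{y}$ be the vector obtained from $\mathbf{x}$ by deleting the coordinate indexed by $u$. Then $\sum_{i\neq u}x_i^{p}=1-\sigma$, and splitting $P_G(\mathbf{x})=\lambda$ into the edges incident with $u$ and the edges of $G-u$, while invoking the eigenequation (\ref{eeq}) at $u$ (legitimate since $p>1$), we get $\sum_{i\in\Gamma(u)}x_i=\lambda x_u^{p-1}$, so the edges at $u$ contribute $2x_u\cdot\lambda x_u^{p-1}=2\lambda\sigma$ and hence $P_{G-u}(\mathbf{y})=\lambda(1-2\sigma)$. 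Evaluating $\lambda^{(p)}(G-u)$ at the unit vector $\mathbf{y}\big/\bigl(\sum_{i\neq u}x_i^{p}\bigr)^{1/p}$ then gives
\[
\lambda^{(p)}(G-u)\ \geq\ \frac{\lambda(1-2\sigma)}{(1-\sigma)^{2/p}}.
\]

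The next step is to control $\sigma$. The hypotheses of Lemma~\ref{le0} hold (with $\leq$ instead of $<$, which in its proof merely turns the final $\leq$ into $<$), so $\sigma\leq(1-\gamma/2)/n=:\sigma_0$. Moreover $f(\sigma):=(1-2\sigma)/(1-\sigma)^{2/p}$ is decreasing on $[0,1/2)$: its logarithmic derivative $-2/(1-2\sigma)+(2/p)/(1-\sigma)$ equals $-2(1-1/p)<0$ at $\sigma=0$ and vanishes only at $\sigma=(p-1)/(p-2)<0$. Since $\sigma_0<1/n<1/2$, we may replace $\sigma$ by $\sigma_0$ above, and a short computation gives $f(\sigma_0)=(n-2+\gamma)\,n^{2/p-1}\big/(n-1+\gamma/2)^{2/p}$. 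Multiplying through by $(n-1)^{2/p-1}$ and dividing by $\lambda\,n^{2/p-1}$, the assertion of the lemma reduces to the scalar inequality
\[
\frac{(n-2+\gamma)\,(n-1)^{2/p-1}}{(n-1+\gamma/2)^{2/p}}\ \geq\ \Bigl(\tfrac{n-2}{n-1}\Bigr)^{1-(1-1/p)\gamma}.
\]

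This last inequality is the main obstacle: both sides are $1-O(1/n)$ and agree to first order, so the crude estimate $(1+x)^{2/p}\le(1+x)^2$ (i.e.\ $\log(1+x)\le x$) loses too much, most acutely when $p$ is near $2$. What works is the second-order Bernoulli inequality $(1+x)^{r}\le 1+rx+\binom{r}{2}x^{2}$ for $1\le r\le 2$ and $x\ge0$ — immediate from the convexity of $1+rx+\binom{r}{2}x^{2}-(1+x)^{r}$ together with the vanishing of this function and its derivative at $0$. Writing $n-1+\gamma/2=(n-1)\bigl(1+\tfrac{\gamma}{2(n-1)}\bigr)$ and applying it with $r=2/p$ shows the left-hand side is at least $(n-2+\gamma)\big/\bigl((n-1)+\tfrac{\gamma}{p}+\tfrac{(2-p)\gamma^{2}}{4p^{2}(n-1)}\bigr)$; applying ordinary Bernoulli (exponent in $[0,1]$) to $\bigl(1-\tfrac1{n-1}\bigr)^{1-(1-1/p)\gamma}$ bounds the right-hand side by $(n-2+\beta\gamma)/(n-1)$, where $\beta:=1-1/p$. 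Cross-multiplying, subtracting the common $(n-1)$-multiple and cancelling the factor $\gamma(1-\beta)=\gamma/p$, the inequality collapses to
\[
1-\beta\gamma\ \geq\ \frac{(2-p)\gamma}{4p}\cdot\frac{n-2+\beta\gamma}{n-1},
\]
which holds with a wide margin, since $\beta\gamma<1/2$ makes the left side exceed $1/2$, while $2-p<1$, $p>1$ and $\gamma<1$ make the right side less than $1/4$. All ingredients are elementary; the only subtlety is that the first-order Bernoulli bound is not strong enough, and the second-order one is exactly what makes the reduction succeed for every $\gamma\in(0,1)$.
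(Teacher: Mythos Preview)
Your argument is correct, and the first two-thirds of it---deleting $u$, renormalising, obtaining $\lambda^{(p)}(G-u)\geq\lambda(1-2\sigma)/(1-\sigma)^{2/p}$, invoking Lemma~\ref{le0}, and reducing via monotonicity of $f$ to the scalar inequality in $\sigma_0=(1-\gamma/2)/n$---matches the paper exactly.

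The difference is in how the final scalar inequality is handled. You expand the two sides separately (second-order Bernoulli on $(1+\gamma/(2(n-1)))^{2/p}$ for the left, first-order Bernoulli on $(1-1/(n-1))^{1-(1-1/p)\gamma}$ for the right), then cross-multiply and chase the resulting $O(n^{-1})$ terms. The paper instead writes the left-hand ratio, with $\xi:=\gamma/2$, as the product
\[
\left(1-\frac{1-\xi}{n-1+\xi}\right)\left(1-\frac{\xi}{n-1+\xi}\right)^{2/p-1},
\]
applies first-order Bernoulli to \emph{each} factor to bound it below by a power of the common base $1-1/(n-1+\xi)$, and then simply adds the exponents: $(1-\xi)+\xi(2/p-1)=1-(1-1/p)\gamma$. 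Replacing the base by the smaller $1-1/(n-1)$ finishes. This factorisation is the point that lets the paper avoid second-order terms altogether; your observation that a naive first-order expansion fails is correct, and you compensate with the second-order bound, whereas the paper sidesteps the issue by choosing a factorisation in which two first-order bounds suffice. Both routes are valid; the paper's is a little slicker, yours is more direct and makes the $O(n^{-2})$ slack explicit.
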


\begin{proof}
Let $p,$ $\gamma,$ $A,$ $R,$ and $n$ satisfy the requirements, let
$\mathbf{x}=\left[  x_{i}\right]  $ be a nonnegative eigenvector to
$\lambda^{\left(  p\right)  }\left(  G\right)  $ and $u$ be a vertex with
$x_{u}:=\min\left\{  x_{1},\ldots,x_{n}\right\}  ;$ set $\sigma:=x_{k}^{p}.$
Obviously, Lemma \ref{le0} can be applied here, getting%
\begin{equation}
\sigma\leq\frac{1-\gamma/2}{n}.\label{ls}%
\end{equation}
Next, set for short $\delta:=\delta\left(  G\right)  ,$ $\lambda_{n}%
:=\lambda^{\left(  p\right)  }\left(  G\right)  ,$ and $\lambda_{n-1}%
:=\lambda^{\left(  p\right)  }\left(  G-u\right)  .$ Letting $\mathbf{x}%
^{\prime}$ be the $\left(  n-1\right)  $-vector obtained from $\mathbf{x}$ by
omitting the entry $x_{k},$ we see that
\[
P_{G-k}\left(  \mathbf{x}^{\prime}\right)  =P_{G}\left(  \mathbf{x}\right)
-2x_{k}\sum_{i\in\Gamma\left(  k\right)  }x_{i}=\lambda_{n}-2x_{k}\left(
\lambda_{n}x_{k}^{p-1}\right)  =\lambda_{n}\left(  1-2x_{k}^{p}\right)  .
\]
On the other hand,
\[
P_{G-k}\left(  \mathbf{x}^{\prime}\right)  \leq\lambda_{n-1}\left\Vert
\mathbf{x}^{\prime}\right\Vert _{p}^{2}=\lambda_{n-1}\left(  1-x_{k}%
^{p}\right)  ^{2/p},
\]
hence, after some algebra, we find that
\begin{equation}
\lambda_{n-1}\geq\frac{1-2\sigma}{\left(  1-\sigma\right)  ^{2/p}}\lambda
_{n}.\label{in10}%
\end{equation}
Note that the function
\[
f\left(  x\right)  =\frac{1-2y}{\left(  1-y\right)  ^{2/p}}%
\]
is decreasing in $y$ for $0<y<1,$ for the derivative of $f\left(  y\right)  $
satisfies
\[
f^{\prime}\left(  y\right)  =\frac{-2\left(  1-y\right)  ^{2/p}+\frac{2}%
{p}\left(  1-2y\right)  \left(  1-y\right)  ^{2/p-1}}{\left(  1-y\right)
^{4/p}}=-\frac{2\left(  1-y\right)  ^{2/p-1}}{p\left(  1-y\right)  ^{4/p}%
}\left(  \left(  p-1\right)  +2y\right)  <0.
\]
Therefore, in view of (\ref{ls}), we find that
\[
f\left(  \sigma\right)  \geq f\left(  \frac{1-\gamma/2}{n}\right)  .
\]
Thus, setting for short $\xi:=\gamma/2$, we see that
\[
\frac{1-2\sigma}{\left(  1-\sigma\right)  ^{2/p}}\geq\frac{1-2\frac{1-\xi}{n}%
}{\left(  1-\frac{1-\xi}{n}\right)  ^{2/p}}.
\]
Plugging this back in (\ref{in10}), we find that
\begin{align*}
\frac{\lambda_{n-1}\left(  n-1\right)  ^{2/p-1}}{\lambda_{n}n^{2/p-1}} &
\geq\left(  1-\frac{\left(  1-\xi\right)  /n}{1-\left(  1-\xi\right)
/n}\right)  \cdot\left(  \frac{1-1/n}{1-\left(  1-\xi\right)  /n}\right)
^{2/p-1}\\
&  =\left(  1-\frac{1-\xi}{n-1+\xi}\right)  \cdot\left(  1-\frac{\xi}{n-1+\xi
}\right)  ^{2/p-1}.
\end{align*}
To estimate the latter expression, note that $0<1-\xi<1$ and $0<\xi<1;$ hence,
Bernoulli's inequality implies that
\[
\left(  1-\frac{1-\xi}{n-1+\xi}\right)  \geq\left(  1-\frac{1}{n-1+\xi
}\right)  ^{1-\xi},
\]
and
\[
\left(  1-\frac{\xi}{n-1+\xi}\right)  \geq\left(  1-\frac{1}{n-1+\xi}\right)
^{\xi}.
\]
Thus, we obtain
\[
\frac{\lambda_{n-1}\left(  n-1\right)  ^{2/p-1}}{\lambda_{n}n^{2/p-1}}%
\geq\left(  1-\frac{1}{n-1+\xi}\right)  ^{1-2\left(  1-1/p\right)  \xi
}>\left(  1-\frac{1}{n-1}\right)  ^{1-\left(  1-1/p\right)  \gamma},
\]
as claimed. Lemma \ref{le1} is proved.
\end{proof}

The main idea of the proof of Theorem \ref{t4} is to iterate the removal of
vertices of smallest entry in eigenvectors to $\lambda^{\left(  p\right)  }$.
Every time a vertex is removed, the ratio of $\lambda^{\left(  p\right)  }$ of
the remaining graph to its order increases. So the vertex removal must stop
before $\lambda^{\left(  p\right)  }$ exceeds its absolute maximum. As this
stop happens fairly soon, the order of the remaining graph is fairly
large.\bigskip

\begin{proof}
[\textbf{Proof of Theorem \ref{t4}}]Let $p,\gamma,A,R$ and $n$ be as required,
and let $G$ be a graph of order $n,$ with
\[
\lambda^{\left(  p\right)  }\left(  G\right)  n^{2/p-1}>An-R/n\text{ \ \ and
\ \ }\delta\left(  G\right)  \leq\left(  A-\gamma\right)  n.
\]
Define a decreasing sequence of graphs $G_{n}\supset G_{n-1}\supset\cdots$ by
the following procedure $\mathcal{P}:\medskip$

$\qquad G_{n}:=G;$

\qquad$i:=n;$

\qquad\textbf{while }$\delta\left(  G_{i}\right)  \leq\left(  A-\gamma\right)
i$ \textbf{begin}

1.\qquad\emph{Select an eigenvector }$\left(  x_{1},\ldots,x_{i}\right)  $
\emph{to }$\lambda^{\left(  p\right)  }\left(  G_{i}\right)  ;$

2.\qquad\emph{Select a vertex }$u\in V\left(  G_{i}\right)  $ \emph{with
}$x_{u}=\min\left\{  x_{1},\ldots,x_{i}\right\}  ;$

3.\qquad$G_{i-1}:=G_{i}-u;$

4.$\qquad i:=i-1;$

\qquad\textbf{end}.\textbf{\medskip}

We claim that the following compound statement is true:

\emph{(i) }at line 1 we always have
\begin{equation}
i>A^{p/\left(  \gamma p-\gamma\right)  }n>4R/\gamma,\label{b1}%
\end{equation}

\emph{(ii) }at line 3 we always have
\begin{equation}
\lambda^{\left(  p\right)  }\left(  G_{i-1}\right)  \left(  i-1\right)
^{2/p-1}>\left(  1-\frac{1}{i-1}\right)  ^{1-\left(  1-1/p\right)  \gamma
}\lambda^{\left(  p\right)  }\left(  G_{i}\right)  i^{2/p-1}.\label{b2}%
\end{equation}

Clearly, to prove (\ref{b2}) we may use Lemma \ref{le1}, which, however,
requires that $i>4R/\gamma;$ this is why we have to prove (\ref{b1}) as well.
We shall use induction on $d:=n-i.$ To start the induction let $d=n-n=0.$
Clearly inequality (\ref{b1}) is true for $i=n$. Since, at line 1 we always
have $\delta\left(  G_{i}\right)  \leq\left(  A-\gamma\right)  i,$ after
removing the vertex $u$ Lemma \ref{le1}, together with (\ref{lv}), implies
(\ref{b2}). Now, assume that (\ref{b1}) and (\ref{b2}) hold for $0\leq d\leq
n-i;$ we shall prove them for $d+1=n-\left(  i-1\right)  .$ First, the
inductive assumption implies that
\[
\frac{\lambda^{\left(  p\right)  }\left(  G_{s}\right)  s^{2/p-1}}%
{\lambda^{\left(  p\right)  }\left(  G_{s+1}\right)  \left(  s+1\right)
^{2/p-1}}>\left(  1-\frac{1}{s}\right)  ^{1-\left(  1-1/p\right)  \gamma}%
\]
for each $s=n-1,\ldots,i.$ Hence, multiplying these inequalities for
$s=n-1,\ldots,i,$ we obtain
\[
\lambda^{\left(  p\right)  }\left(  G_{i}\right)  i^{2/p-1}>\left(  \frac
{i-1}{n-1}\right)  ^{1-\left(  1-1/p\right)  \gamma}\lambda^{\left(  p\right)
}\left(  G_{n}\right)  n^{2/p-1}.
\]
On the other hand, by (\ref{max}) we have%
\begin{align}
i-1 &  \geq\lambda^{\left(  p\right)  }\left(  G_{i}\right)  i^{2/p-1}>\left(
\frac{i-1}{n-1}\right)  ^{1-\left(  1-1/p\right)  \gamma}\lambda^{\left(
p\right)  }\left(  G_{n}\right)  n^{2/p-1}\nonumber\\
&  >\left(  \frac{i-1}{n-1}\right)  ^{1-\left(  1-1/p\right)  \gamma}\left(
An-\frac{R}{n}\right)  >\left(  \frac{i-1}{n-1}\right)  ^{1-\left(
1-1/p\right)  \gamma}A\left(  n-1\right)  .\label{x}%
\end{align}
In the last derivation we use that $n\geq R/A,$ which follows from
\[
n>\frac{4R}{\gamma}A^{-p/\left(  \gamma p-\gamma\right)  }>\frac{R}{\gamma
}>RA^{-1}.
\]
From (\ref{x}), we see that%
\[
\left(  \frac{i}{n}\right)  ^{\left(  1-1/p\right)  \gamma}>\left(  \frac
{i-1}{n-1}\right)  ^{\left(  1-1/p\right)  \gamma}=\frac{i-1}{n-1}\left(
\frac{n-1}{i-1}\right)  ^{1-\left(  1-1/p\right)  \gamma}>A,
\]
and so,
\[
i>nA^{p/\left(  \gamma p-\gamma\right)  }\geq\frac{4\left(  R+1\right)
p}{\gamma\left(  p-1\right)  }A^{-p/\left(  \gamma p-\gamma\right)
}A^{p/\left(  \gamma p-\gamma\right)  }>\frac{4R}{\gamma},
\]
implying (\ref{b1}). Therefore, after removing the vertex $u,$ Lemma
\ref{le1}, together with (\ref{lv}), implies that (\ref{b2}) holds as well.
This completes the induction step and the proof of \emph{(i) }and \emph{(ii)}.

Finally, let $H:=G_{i}$ and $k:=v\left(  H\right)  =i,$ where $G_{i}\ $is the
last graph generated by $\mathcal{P}.$ We shall prove the following three
properties of $H:$%
\begin{align}
\delta\left(  H\right)   &  >\left(  A-\gamma\right)  k,\label{c1}\\
k &  >A^{p/\left(  \gamma p-\gamma\right)  }n\label{c2}\\
\lambda^{\left(  p\right)  }\left(  H\right)  k^{2/p-1} &  >Ak.\label{c3}%
\end{align}
Indeed, inequality (\ref{c1}) is obvious, as this is the loop exit condition.
Also inequality (\ref{c2}) holds because of (\ref{b1}). Finally, note that
\begin{align*}
\lambda^{\left(  p\right)  }\left(  H\right)  k^{2/p-1} &  >\left(  \frac
{k-1}{n-1}\right)  ^{1-\left(  1-1/p\right)  \gamma}\lambda^{\left(  p\right)
}\left(  G_{n}\right)  n^{2/p-1}\\
&  >A\left(  \frac{k-1}{n-1}\right)  ^{1-\left(  1-1/p\right)  \gamma}\left(
n-\frac{R}{nA}\right)  .
\end{align*}
To prove (\ref{c3}), we shall show that%
\begin{equation}
\left(  \frac{k-1}{n-1}\right)  ^{1-\left(  1-1/p\right)  \gamma}\left(
n-\frac{R}{nA}\right)  >k,\label{c4}%
\end{equation}
which is equivalent to
\begin{equation}
1-\frac{R}{n^{2}A}>\frac{k}{n}\left(  \frac{n-1}{k-1}\right)  ^{1-\left(
1-1/p\right)  \gamma}.\label{co5}%
\end{equation}
Assume the latter inequality fails, that is to say,%
\[
1-\frac{R}{n^{2}A}\leq\frac{k}{n}\left(  \frac{n-1}{k-1}\right)  ^{1-\left(
1-1/p\right)  \gamma}=\frac{k\left(  n-1\right)  }{n\left(  k-1\right)
}\left(  \frac{k-1}{n-1}\right)  ^{\left(  1-1/p\right)  \gamma}.
\]
Using Bernoulli's inequality, we get
\begin{align*}
\left(  1+\frac{n-k}{n\left(  k-1\right)  }\right)  \left(  \frac{k-1}%
{n-1}\right)  ^{\left(  1-1/p\right)  \gamma} &  \leq\left(  1+\frac
{n-k}{n\left(  k-1\right)  }\right)  \left(  1-\left(  1-1/p\right)
\gamma\frac{n-k}{n-1}\right)  \\
&  <1+\frac{n-k}{n\left(  k-1\right)  }-\left(  1-1/p\right)  \gamma\frac
{n-k}{n-1}.
\end{align*}
After some rearrangement we obtain%
\begin{equation}
\frac{\left(  p-1\right)  }{p}\gamma<\frac{n-1}{n\left(  k-1\right)  }%
+\frac{R\left(  n-1\right)  }{\left(  n-k\right)  n^{2}A}<\frac{1}{\left(
k-1\right)  }+\frac{R}{\left(  n-k\right)  nA}.\label{co1}%
\end{equation}
Now, obviously%
\[
\frac{1}{k-1}<\left(  nA^{p/\left(  \gamma p-\gamma\right)  }-1\right)
^{-1}<\left(  \frac{4\left(  R+1\right)  p}{\gamma\left(  p-1\right)
}-1\right)  ^{-1}<\left(  \frac{2p}{\gamma\left(  p-1\right)  }\right)
^{-1}=\frac{\gamma\left(  p-1\right)  }{2p},
\]
and also%
\begin{align*}
\frac{R}{\left(  n-k\right)  nA} &  <\frac{R}{An}<\frac{R}{A}\cdot\frac
{\gamma\left(  p-1\right)  }{4\left(  R+1\right)  p}\cdot A^{p/\left(  \gamma
p-\gamma\right)  }<\frac{1}{A}\cdot\frac{\gamma\left(  p-1\right)  }%
{4p}A^{1/\gamma}\\
&  <\frac{1}{A}\cdot\frac{\gamma\left(  p-1\right)  }{2p}A^{2}<\frac
{\gamma\left(  p-1\right)  }{2p}.
\end{align*}
Therefore, (\ref{co1}) is a contradiction and (\ref{c4}) holds.

Hence the graph $H$ has the required properties and Theorem \ref{t4} is proved.
\end{proof}

\subsection{Proof of Theorem \ref{js}}

The proof of Theorem \ref{js} is based on the following nonspectral result,
proved in \cite{BoNi04}.

\begin{lemma}
\label{leKd} Let $r\geq2$ and $G$ be graph a of order $n.$ If $G$ contains a
$K_{r+1}$ and $\ \delta\left(  G\right)  >\left(  1-1/r-1/r^{4}\right)  n,$
then $\mathrm{js}_{r+1}\left(  G\right)  >n^{r-1}/r^{r+3}.\hfill\square$
\end{lemma}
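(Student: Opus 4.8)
The plan is to translate the conclusion into a clique-counting problem inside the common neighborhood of one well-chosen edge, and to produce that edge from the given copy of $K_{r+1}$ together with the minimum-degree hypothesis. Fix an $(r+1)$-clique $S=\{u_0,\ldots,u_r\}$ and write $\overline{d}(v):=n-1-\deg(v)$, so the hypothesis reads $\overline{d}(v)<(1/r+1/r^{4})n$ for every $v$. Since an $(r+1)$-joint of size $t$ on an edge $\{x,y\}$ is exactly a family of $t$ copies of $K_{r-1}$ in $\Gamma(x)\cap\Gamma(y)$, it suffices to exhibit a single edge whose common neighborhood carries more than $n^{r-1}/r^{r+3}$ copies of $K_{r-1}$. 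The first step is to see how $\Gamma(u_0),\ldots,\Gamma(u_r)$ cover $V(G)$: summing the co-degrees gives $\sum_i\overline{d}(u_i)<(r+1)(1/r+1/r^{4})n=(1+1/r+(r+1)/r^{4})n$, so the $r+1$ neighborhoods overlap very heavily, and after discarding the few vertices missing two or more of the $u_i$ the remaining vertices split into classes $V_0,\ldots,V_r$, where $V_i$ collects the vertices adjacent to every $u_j$ with $j\neq i$. A vertex of $V_i$ is then automatically joined to the whole $r$-clique $S\setminus\{u_i\}$, and $\sum_i|V_i|$ is essentially $n$.

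For $r=2$ this already closes the argument and is worth recording as the base case. The three sets $\Gamma(u_0),\Gamma(u_1),\Gamma(u_2)$ each have size exceeding $\tfrac{7}{16}n$, so by inclusion–exclusion $\sum_{i<j}|\Gamma(u_i)\cap\Gamma(u_j)|\geq\sum_i|\Gamma(u_i)|-n>\tfrac{5}{16}n$, whence some pair $\{u_i,u_j\}$ has more than $\tfrac{5}{48}n>n/2^{5}$ common neighbors, i.e.\ a book of the required size $n^{r-1}/r^{r+3}=n/32$. For general $r$ I would select the edge $\{u_i,u_k\}$ that avoids the two smallest classes, so that $W:=\Gamma(u_i)\cap\Gamma(u_k)$ contains the $r-1$ largest classes $V_j$; then $K_{r+1}$'s on $\{u_i,u_k\}$ correspond to copies of $K_{r-1}$ inside $G[W]$, and $W$ is large and contains the $(r-1)$-clique inherited from $S$.

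The decisive and hardest step is to lower-bound $k_{r-1}(G[W])$ by $n^{r-1}/r^{r+3}$. A naive greedy extension is hopeless here: the common neighborhood of an $(r-2)$-clique in $W$ is only bounded below by $|W|-(r-2)(1/r+1/r^{4})n$, which the borderline threshold drives to zero, and indeed a graph with this minimum degree containing a single $K_{r-1}$ may be Turán-like and clique-poor. What must rescue the count is that non-adjacencies are \emph{globally} scarce, forcing $G[W]$ to be close to a complete $(r-1)$-partite graph on the surviving classes; choosing one vertex from each class produces $\prod_j|V_j|$ candidate $(r-1)$-cliques, of which only a lower-order fraction can fail, since each vertex excludes at most $\overline{d}(v)<(1/r+1/r^{4})n$ partners. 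Two quantitative points then remain: an AM–GM estimate, fed by $\sum_j|V_j|\gtrsim n$, showing the chosen class sizes are balanced enough that $\prod_j|V_j|$ exceeds $(n/r)^{r-1}$ up to the claimed constant; and a bound showing the tuples containing a non-adjacent pair form only a small fraction of the product. Tracking how the $1/r^{4}$ slack in the degree hypothesis propagates through these two estimates down to the final factor $r^{-(r+3)}$ is the genuine technical core — precisely the stability-type computation of \cite{BoNi04} — and I expect the control of the bad (non-clique) tuples to be the main obstacle, since that is exactly the place where the minimum-degree bound must be exploited globally rather than vertex-by-vertex.
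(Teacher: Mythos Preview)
The paper does not prove this lemma at all: it is quoted from \cite{BoNi04} and marked with a $\square$, so there is no proof here against which to compare your attempt. Your outline is in the right direction and matches the strategy of \cite{BoNi04}: anchor on a fixed $K_{r+1}$, use the co-degree bound to partition almost all of $V(G)$ into classes $V_0,\ldots,V_r$ attached to the $r$-subcliques, pick the edge opposite the two smallest classes, and count $(r-1)$-cliques across the remaining classes by a product-minus-bad-tuples estimate. The $r=2$ base case you wrote out is correct.

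That said, you have not actually given a proof, and you say so yourself: the step ``bounding the non-clique tuples and tracking the $1/r^{4}$ slack down to $r^{-(r+3)}$'' is the entire content of the lemma, and you defer it to \cite{BoNi04}. Two small points if you do carry it out. First, your classes $V_j$ as written are not disjoint---a vertex adjacent to every $u_i$ satisfies the defining condition for every $j$---so either refine the definition (e.g.\ let $V_j$ be the vertices missing \emph{exactly} $u_j$, with a separate class for vertices missing none) or check that the resulting overcount is harmless. Second, your claim that the $r-1$ retained classes have product at least roughly $(n/r)^{r-1}$ needs the observation that the two discarded classes are the smallest, so each retained $|V_j|$ is at least the average of all $r+1$ class sizes; this is where the AM--GM step actually bites.
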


The idea of Lemma \ref{leKd} can be traced back to Erd\H{o}s; its main
advantage is that the bound on the jointsize can be deduced from two simpler
conditions: presence of $K_{r+1}$ and sufficient minimum degree$.$ Although,
these conditions may not hold in $G,$ Theorem \ref{t4} guarantees that there
is a large subgraph $H$ of $G$ for which the conditions do hold. Now, applying
Lemma \ref{leKd} to $H$, we obtain the desired bound on $\mathrm{js}%
_{r+1}\left(  G\right)  $.\bigskip

\begin{proof}
[\textbf{Proof of Theorem \ref{js}}]Let $G$ be a graph of order $n$ such that
$\lambda^{\left(  p\right)  }\left(  G\right)  \geq\lambda^{\left(  p\right)
}\left(  T_{r}\left(  n\right)  \right)  $ and assume that $G\neq T_{r}\left(
n\right)  .$ Theorem \ref{Tur} implies that $G$ contains a $K_{r+1}.$ Now, if
\begin{equation}
\delta\left(  G\right)  >\left(  1-1/r-1/r^{4}\right)  n,\label{mnd}%
\end{equation}
then Lemma \ref{leKd} implies that
\[
\mathrm{js}_{r+1}\left(  G\right)  >\frac{n^{r-1}}{r^{r+3}}>\frac{n^{r-1}%
}{r^{r^{6}p/\left(  p-1\right)  }},
\]
completing the proof. Thus we shall assume that (\ref{mnd}) fails. Then,
letting
\[
\gamma:=1/r^{4},\text{ \ \ }A:=1-1/r,\text{ \ \ }R:=r/4,
\]
we see that $\delta\left(  G\right)  \leq\left(  A-\gamma\right)  n,$ and, in
view of (\ref{lv}), we also see that
\[
\lambda^{\left(  p\right)  }\left(  G\right)  n^{2/p-1}\geq2e\left(
T_{r}\left(  n\right)  \right)  /n\geq\left(  1-1/r\right)  n-r/4n=An-R/n.
\]
We want to apply Theorem \ref{t4}, but to do so we have to ensure that
$1<p\leq2$ and that
\begin{equation}
n>\frac{4\left(  R+1\right)  p}{\gamma\left(  p-1\right)  }A^{-p/\left(
\gamma p-\gamma\right)  }=\frac{\left(  r+4\right)  r^{4}p}{p-1}\left(
\frac{r}{r-1}\right)  ^{r^{4}p/\left(  p-1\right)  }.\label{mnn}%
\end{equation}
First we shall show that if (\ref{mnn}) fails, then the proof is trivially
completed. Assume that (\ref{mnn}) fails. Since $K_{r+1}\subset G,$ we have
$\mathrm{js}_{r+1}\left(  G\right)  \geq1$; hence the proof would be
completed, if we can show that
\begin{equation}
1>\frac{n^{r-1}}{r^{r^{6}p/\left(  p-1\right)  }}.\label{mnj}%
\end{equation}
Assume for a contradiction that (\ref{mnj}) fails. Then
\[
r^{r^{6}p/\left(  p-1\right)  }<n^{r-1}<\left(  \frac{\left(  r+4\right)
r^{4}p}{p-1}\right)  ^{r-1}\left(  \frac{r}{r-1}\right)  ^{r^{4}\left(
r-1\right)  p/\left(  p-1\right)  }.
\]
To simplify the right side, we use the obvious inequalities $r/\left(
r-1\right)  <r$ and
\[
\left(  \frac{\left(  r+4\right)  r^{4}p}{p-1}\right)  ^{r-1}<\left(
\frac{4r^{5}p}{p-1}\right)  ^{r}<r^{5r}\left(  \frac{4p}{p-1}\right)
^{r},\text{ }%
\]
thus getting
\[
r^{r^{6}p/\left(  p-1\right)  }<\left(  \frac{4p}{p-1}\right)  ^{r}%
r^{r^{4}\left(  r-1\right)  p/\left(  p-1\right)  +5r}.
\]
Since $r\geq2,$ and $4^{x}>e^{x}>x$ for $x>0,$ we see that%
\[
r^{8rp/\left(  p-1\right)  }>4^{4rp/\left(  p-1\right)  }>\left(  4p/\left(
p-1\right)  \right)  ^{r}.
\]
Hence,%
\begin{align*}
r^{6}p/\left(  p-1\right)   &  <r^{5}\left(  r-1\right)  p/\left(  p-1\right)
+5r+8rp/\left(  p-1\right)  \\
&  =\left(  r^{6}-r^{5}+5r\left(  p-1\right)  /p+8r\right)  p/\left(
p-1\right)  \\
&  <\left(  r^{6}-r^{5}+13r\right)  p/\left(  p-1\right)  ,
\end{align*}
and after obvious cancellations, we find that $r^{5}<13r,$ which is the
desired contradiction. Therefore, we can assume that (\ref{mnn}) holds.

Now, suppose that $1<p\leq2.$ All parameter conditions of Theorem \ref{t4} are
met, and so there is an induced subgraph $H\subset G$ of order
\[
k>A^{p/\left(  \gamma p-\gamma\right)  }n=\left(  1-1/r\right)  ^{r^{4}%
p/\left(  p-1\right)  }n>\frac{n}{r^{r^{4}p/\left(  p-1\right)  }}%
\]
such that $\lambda^{\left(  p\right)  }\left(  H\right)  >\left(
1-1/r\right)  k$ and $\delta\left(  H\right)  >\left(  1-1/r-1/r^{4}\right)
k.$ By Theorem \ref{Tur}, $K_{r+1}\subset H;$ hence, Lemma \ref{leKd} implies
that
\[
\mathrm{js}_{r+1}\left(  H\right)  >\frac{k^{r-1}}{r^{r+3}}>\left(  \frac
{n}{r^{r^{4}p/\left(  p-1\right)  }}\right)  ^{r-1}\frac{1}{r^{r+3}}%
>\frac{n^{r-1}}{r^{r^{4}\left(  r-1\right)  p/\left(  p-1\right)  +r+3}}%
>\frac{n^{r-1}}{r^{r^{6}p/\left(  p-1\right)  }}.
\]
Since $\mathrm{js}_{r+1}\left(  G\right)  \geq\mathrm{js}_{r+1}\left(
H\right)  ,$ the proof of Theorem \ref{js} is completed whenever $1<p\leq2.$

To finish the proof for all $p$, assume that $p>2.$ Then in view of
(\ref{inc}) and (\ref{lv}) we have
\[
\lambda^{\left(  2\right)  }\left(  G\right)  \geq\lambda^{\left(  p\right)
}\left(  G\right)  n^{2/p-1}\geq\lambda^{\left(  p\right)  }\left(
T_{r}\left(  n\right)  \right)  n^{2/p-1}>\left(  1-\frac{1}{r}\right)
n-\frac{r}{4n}.
\]
Applying Theorem \ref{t4} with $p=2,$ we get a subgraph $H\subset G$ of order
\[
k>A^{2r^{4}}n=\left(  1-1/r\right)  ^{2r^{4}}n>\frac{n}{r^{2r^{4}}}%
\]
such that $\lambda^{\left(  p\right)  }\left(  H\right)  >\left(
1-1/r\right)  k$ and $\delta\left(  H\right)  >\left(  1-1/r-1/r^{4}\right)
k.$ By Theorem \ref{Tur}, $K_{r+1}\subset H;$ hence, Lemma \ref{leKd} implies
that
\[
\mathrm{js}_{r+1}\left(  H\right)  >\frac{k^{r-1}}{r^{r+3}}>\left(  \frac
{n}{r^{2r^{4}}}\right)  ^{r-1}\frac{1}{r^{r+3}}>\frac{n^{r-1}}{r^{2r^{4}%
\left(  r-1\right)  +r+3}}>\frac{n^{r-1}}{r^{2r^{5}}}>\frac{n^{r-1}}%
{r^{r^{6}p/\left(  p-1\right)  }}.
\]
Since $\mathrm{js}_{r+1}\left(  G\right)  \geq\mathrm{js}_{r+1}\left(
H\right)  ,$ Theorem \ref{js} is proved completely.\ 
\end{proof}

\bigskip

\subsection{Proof of Theorem \ref{sE}}

For the proof of Theorem \ref{sE} we rely on Theorem \ref{t4} and on a
non-spectral result, proved in \cite{Nik10}, Theorem 6. To state it we first
extend the definition of $K_{r}^{+}\left(  t\right)  $ as follows: given the
integers $r\geq2,$ $s\geq2,$ and $t\geq1,$ let $K_{r}^{+}\left(  s;t\right)  $
be the complete $r$-partite graph with first $r-1$ parts of size $s$ and the
last part of size $t,$ and with an edge added to its first part.

\begin{theorem}
\label{thv4}Let $r,$ $c,$ and $n$ satisfy%
\[
r\geq2,\text{ \ \ \ }0<c\leq r^{-\left(  r+8\right)  r},\text{ \ \ \ and
\ \ \ }\log n\geq2/c.
\]
If $G$ is a graph of order $n,$ with $K_{r+1}\subset G$ and $\delta\left(
G\right)  >\left(  1-1/r-1/r^{4}\right)  n,$ then $G$ contains a $K_{r}%
^{+}\left(  \left\lfloor c\log n\right\rfloor ;\left\lceil n^{1-cr^{3}%
}\right\rceil \right)  .\hfill\square.$
\end{theorem}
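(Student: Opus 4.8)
The statement is purely combinatorial --- no use is made of $\lambda^{(p)}$ --- so the plan is to combine the joint (book) bound of Lemma \ref{leKd} with an Erd\H{o}s-type supersaturation/blow-up argument. \textbf{First}, since $K_{r+1}\subset G$ and $\delta(G)>\left(1-1/r-1/r^{4}\right)n$, Lemma \ref{leKd} supplies an edge $\{a,b\}$ lying in more than $n^{r-1}/r^{r+3}$ copies of $K_{r+1}$. Writing $N:=\Gamma(a)\cap\Gamma(b)$, each such $K_{r+1}$ is $\{a,b\}$ together with an $(r-1)$-clique inside $N$, so that
\[
k_{r-1}\!\left(G[N]\right)>\frac{n^{r-1}}{r^{r+3}}.
\]
This is the seed: an edge whose link $N$ is rich in $(r-1)$-cliques, and I will use $\{a,b\}$ as the pair carrying the added edge of $K_{r}^{+}(s;t)$.

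\textbf{Second}, I would extract from this display both that $N$ is large and that $G[N]$ is clique-dense. Since $k_{r-1}(G[N])\le|N|^{r-1}$ we get $|N|>n/r^{(r+3)/(r-1)}\ge n/r^{5}$, a linear set; and since $|N|\le n$ the same display gives the density bound $k_{r-1}(G[N])\ge|N|^{r-1}/r^{r+3}$. Thus $G[N]$ carries a positive density $\rho\ge r^{-(r+3)}$ of $(r-1)$-cliques on a linear vertex set, which is precisely the input a supersaturation argument needs.

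\textbf{Third} comes the engine. Writing $s:=\lfloor c\log n\rfloor$ and $t:=\lceil n^{1-cr^{3}}\rceil$, I would build inside $G[N]$ a complete $(r-1)$-partite subgraph $K_{r-1}(\underbrace{s,\dots,s}_{r-2};t)$, take its $r-2$ small parts and one large part as parts $V_{2},\dots,V_{r}$ of the target graph, and set $V_{1}:=\{a,b\}\cup C$, where $C$ consists of $s-2$ common neighbours of $V_{2}\cup\dots\cup V_{r}$ --- so that $ab$ is the added edge and $V_{1},\dots,V_{r}$ span $K_{r}^{+}(s;t)$ (every part lies in $N$, hence is joined to both $a$ and $b$). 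To place the small parts while keeping a polynomially large common neighbourhood, I would run the blow-up via iterated K\H{o}v\'{a}ri--S\'{o}s--Tur\'{a}n averaging, or cleaner, via dependent random choice: find a polynomially large ``robust'' set $U\subset N$ such that every $r s$-subset of $U$ has at least $t$ common neighbours, and then use the clique density $\rho$ to locate the $r-2$ small parts within $U$, the surviving common neighbours furnishing both the large part $V_{r}$ and the extra vertices of $C$. The point is that each small part of size $s=c\log n$ is paid for by a \emph{multiplicative} factor $\rho^{\,O(s)}=n^{-O(c)}$ rather than an additive $\Theta(n)$ loss, so after all $O(r)$ parts are placed the common neighbourhood still has size $\ge n^{1-cr^{3}}$; the (deliberately generous, non-optimal) threshold $c\le r^{-(r+8)r}$ and the hypothesis $\log n\ge 2/c$ --- the latter forcing $s\ge 2$ so $V_{1}$ can actually carry an edge --- are calibrated so that all these inequalities hold at once.

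\textbf{The main obstacle} is exactly this third step: the \emph{asymmetric} blow-up, in which $r-2$ parts must reach logarithmic size while one part stays polynomially large, with the precise exponent $cr^{3}$ and admissible range $c\le r^{-(r+8)r}$ pinned down. The naive union bound on common neighbourhoods is useless here --- it already goes negative for $(r-1)s$ vertices with $s=O(1)$ --- so one genuinely needs a counting/dependent-random-choice argument that converts ``positive $(r-1)$-clique density on a linear set'' into the exact sizes $\lfloor c\log n\rfloor$ and $\lceil n^{1-cr^{3}}\rceil$, while also guaranteeing that enough common neighbours of $V_{2}\cup\dots\cup V_{r}$ survive to complete the first part. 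The bookkeeping of these two competing demands --- logarithmic parts versus a polynomial part, and the extra $s-2$ common neighbours for $V_{1}$ --- is the delicate core of the proof; the reduction to it via Lemma \ref{leKd} is routine by comparison.
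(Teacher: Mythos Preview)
The paper does not contain a proof of this statement. Theorem~\ref{thv4} is quoted from \cite{Nik10} (as the sentence introducing it says explicitly, and as the terminal $\square$ signals), so there is no ``paper's own proof'' to compare your proposal against.

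On the merits of your outline: the reduction via Lemma~\ref{leKd} to an edge $\{a,b\}$ whose link $N=\Gamma(a)\cap\Gamma(b)$ satisfies $k_{r-1}(G[N])>n^{r-1}/r^{r+3}$, hence $|N|>n/r^{5}$ and $(r-1)$-clique density at least $r^{-(r+3)}$, is correct and is the natural first move. What remains --- converting this clique density into the asymmetric complete $(r-1)$-partite graph with $r-2$ parts of size $\lfloor c\log n\rfloor$ and one of size $\lceil n^{1-cr^{3}}\rceil$, while simultaneously reserving $s-2$ further common neighbours for $V_{1}$ --- is, as you yourself flag, the entire substance of the theorem, and you have not carried it out. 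You name dependent random choice as the engine and describe the shape of the argument, but the actual counting that produces the specific exponent $cr^{3}$ and justifies the threshold $c\le r^{-(r+8)r}$ is absent; that is precisely what \cite{Nik10} provides through an explicit iterated averaging. So your proposal is a correct plan whose decisive step is deferred to exactly the external reference the present paper already invokes.
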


For the proof of Theorem \ref{sE} we shall need a corollary of this statement.

\begin{corollary}
\label{cor1}Let $r,$ $a,$ and $n$ satisfy%
\[
r\geq2,\text{ \ \ \ }0<a\leq r^{-\left(  r+8\right)  r},\text{ \ \ \ and
\ \ \ }\log n\geq2/a.
\]
If $G$ is a graph of order $n,$ with $K_{r+1}\subset G$ and $\delta\left(
G\right)  >\left(  1-1/r-1/r^{4}\right)  n,$ then \ $G$ contains a $K_{r}%
^{+}\left(  \left\lfloor a\log n\right\rfloor \right)  .$
\end{corollary}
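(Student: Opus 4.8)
The plan is to invoke Theorem~\ref{thv4} with its parameter $c$ chosen to equal $a$, and then to pass to a subgraph by shrinking the last part of the complete multipartite graph that it produces.

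First I would note that the hypotheses of Corollary~\ref{cor1}, namely $r\geq 2$, $0<a\leq r^{-(r+8)r}$, and $\log n\geq 2/a$, are exactly the hypotheses of Theorem~\ref{thv4} with $c$ replaced by $a$; moreover we are given $K_{r+1}\subset G$ and $\delta(G)>(1-1/r-1/r^4)n$. Hence Theorem~\ref{thv4} yields
\[
K_r^+\bigl(\lfloor a\log n\rfloor;\,\lceil n^{1-ar^3}\rceil\bigr)\subset G .
\]
Next I would observe that $K_r^+(s;t)$---the complete $r$-partite graph with $r-1$ parts of size $s$, one part of size $t$, and an extra edge inside the first part---contains $K_r^+(m)$ whenever $2\leq m\leq\min\{s,t\}$: simply retain the two endpoints of the added edge and choose $m$ vertices in each part. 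Here $s=m=\lfloor a\log n\rfloor$, and $\log n\geq 2/a$ gives $a\log n\geq 2$, so $m\geq 2$; thus the only thing left to check is
\[
\lceil n^{1-ar^3}\rceil\ \geq\ \lfloor a\log n\rfloor .
\]

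This is the one place where a short computation is needed, but it is entirely routine. Since $r\geq 2$, we have $ar^3\leq r^{3-(r+8)r}\leq 2^{-17}<1/2$, so $n^{1-ar^3}\geq n^{1/2}$; and $n^{1/2}\geq\log n\geq a\log n$, the comparison $n^{1/2}\geq\log n$ holding for every $n\geq 1$ (and in any case the condition $\log n\geq 2/a$ already forces $n$ to be astronomically large, so there is room to spare). Therefore $\lceil n^{1-ar^3}\rceil\geq n^{1/2}\geq\lfloor a\log n\rfloor$, and consequently $K_r^+(\lfloor a\log n\rfloor)\subset G$, which proves the corollary.

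I do not expect any real obstacle here: the entire substance sits in Theorem~\ref{thv4}, and the corollary amounts to the remark that, because the hypothesis pins $n$ down to be enormous, the polynomial-size last part $\lceil n^{1-ar^3}\rceil$ comfortably contains the logarithmic-size part $\lfloor a\log n\rfloor$ that $K_r^+(\lfloor a\log n\rfloor)$ requires. If anything, the only point deserving a moment's care is making sure the stated lower bound on $\log n$ does indeed dominate $ar^3$ away from $1$ and leaves the inequality $\lceil n^{1-ar^3}\rceil\geq\lfloor a\log n\rfloor$ valid for all admissible $n$, which the estimate $ar^3\leq 2^{-17}$ settles.
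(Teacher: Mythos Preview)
Your proof is correct and follows essentially the same approach as the paper's own proof: both apply Theorem~\ref{thv4} with $c=a$ and then verify the elementary inequality $n^{1-ar^3}\geq a\log n$ so that the last part can be shrunk to size $\lfloor a\log n\rfloor$. The only cosmetic difference is that the paper bounds $n^{1/2}>\tfrac{1}{2}\log n>r^{-(r+8)r}\log n\geq a\log n$, whereas you use $n^{1/2}\geq\log n\geq a\log n$ directly.
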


\begin{proof}
There is not much to prove here. Indeed assume that $r,a,$ and $n$ are as
required and let $G$ be a graph of order $n,$ with $K_{r+1}\subset G$ and
$\delta\left(  G\right)  >\left(  1-1/r-1/r^{4}\right)  n.$ By Theorem
\ref{thv4}, $G$ contains a $K_{r}^{+}\left(  \left\lfloor a\log n\right\rfloor
;\left\lceil n^{1-ar^{3}}\right\rceil \right)  .$ First, note that
\[
\log n\geq\frac{2}{a}\geq2r^{\left(  r+8\right)  r}>2;
\]
hence $n^{1/2}>e$ and so, $n^{1/2}>\log n^{1/2}.$ Now, with a lot to spare, we
see that
\[
n^{1-ar^{3}}>n^{1/2}>\frac{1}{2}\log n>r^{-\left(  r+8\right)  r}\log n\geq
a\log n.
\]
So
\[
K_{r}^{+}\left(  \left\lfloor a\log n\right\rfloor \right)  \subset K_{r}%
^{+}\left(  \left\lfloor a\log n\right\rfloor ;\left\lceil n^{1-ar^{3}%
}\right\rceil \right)  \subset G,
\]
completing the proof of Corollary \ref{cor1}.
\end{proof}

\bigskip

\begin{proof}
[\textbf{Proof of Theorem \ref{sE}}]Let $r,$ $p,$ $c,$ and $n$ be as required,
and let $G$ be a graph of order $n$ with $\lambda^{\left(  p\right)  }\left(
G\right)  >\lambda^{\left(  p\right)  }\left(  T_{r}\left(  n\right)  \right)
;$ thus, by Theorem \ref{Tur}, $G$ contains a $K_{r+1}.$ If%
\begin{equation}
\delta\left(  G\right)  >\left(  1-1/r-1/r^{4}\right)  n,\label{mind1}%
\end{equation}
then Corollary \ref{cor1} implies that $G$ contains a $K_{r}^{+}\left(
\left\lfloor c\log n\right\rfloor \right)  ,$ completing the proof. Thus, we
shall assume that (\ref{mind1}) fails. Then, letting
\[
\gamma:=1/r^{4},\text{\ \ \ }A:=1-1/r,\text{ \ \ }R:=r/4,
\]
we see that $\delta\left(  G\right)  \leq\left(  A-\gamma\right)  n,$ and, in
view of (\ref{lv}), we also see that
\[
\lambda^{\left(  p\right)  }\left(  G\right)  n^{2/p-1}\geq2e\left(
T_{r}\left(  n\right)  \right)  /n\geq\left(  1-1/r\right)  n-r/4n=An-R/n.
\]
To apply Theorem \ref{t4}, we have to ensure that $1<p\leq2$ and that
\begin{equation}
n>\frac{4\left(  R+1\right)  p}{\gamma\left(  p-1\right)  }A^{-p/\left(
\gamma p-\gamma\right)  }=\frac{r^{4}\left(  r+4\right)  p}{p-1}\left(
\frac{r}{r-1}\right)  ^{r^{4}p/\left(  p-1\right)  }.\label{mnn1}%
\end{equation}

First from $\log n>2p/\left(  cp-c\right)  $ we obtain
\[
\log n\geq\frac{2p}{c\left(  p-1\right)  }\geq2r^{\left(  r+8\right)  r}%
\frac{p}{p-1}\geq2r^{\left(  r+8\right)  r}\frac{p}{p-1}\frac{\log r}{r^{2}%
}>r^{5}\frac{p}{p-1}\log r.
\]
On the other hand,
\[
\frac{r^{4}\left(  r+4\right)  p}{p-1}\left(  \frac{r}{r-1}\right)
^{r^{4}p/\left(  p-1\right)  }<r^{5}\left(  \frac{4p}{p-1}\right)
r^{r^{4}p/\left(  p-1\right)  }<r^{r^{4}p/\left(  p-1\right)  +5+8p/\left(
p-1\right)  }<r^{r^{5}p/\left(  p-1\right)  }.
\]
Thus, we see that (\ref{mnn1}) holds.

Now, suppose that $1<p\leq2.$ All parameter conditions of Theorem \ref{t4} are
met, and so there is an induced subgraph $H\subset G$ of order
\[
k>A^{p/\left(  \gamma p-\gamma\right)  }n=\left(  1-1/r\right)  ^{r^{4}%
p/\left(  p-1\right)  }n>\frac{n}{r^{r^{4}p/\left(  p-1\right)  }},
\]
with $\lambda^{\left(  p\right)  }\left(  H\right)  >\left(  1-1/r\right)  k$
and $\delta\left(  H\right)  >\left(  1-1/r-1/r^{4}\right)  k.$ We shall prove
that $\log k>2r^{\left(  r+8\right)  r}.$ Indeed,%
\begin{align*}
\log k &  >\log n-\log r^{r^{4}p/\left(  p-1\right)  }>\frac{1}{2}\log
n+2r^{\left(  r+8\right)  r}\frac{p}{p-1}-r^{4}\frac{p}{p-1}\log r\\
&  >\frac{1}{2}\log n+\left(  2r^{\left(  r+8\right)  r-2}-r^{4}\right)
\frac{p}{p-1}\log r>\frac{1}{2}\log n>\frac{1}{c}>2r^{\left(  r+8\right)  r}.
\end{align*}
Now, setting $a:=2c,$ Corollary \ref{cor1} implies that $K_{r}^{+}\left(
\left\lfloor 2c\log k\right\rfloor \right)  \subset H.$ Since
\[
2c\log k>c\log n,
\]
Theorem \ref{sE} is proved if $1<p\leq2.$ Now, assume that $p>2.$ Then, in
view of (\ref{inc}) and (\ref{lv}), we have%
\[
\lambda^{\left(  2\right)  }\left(  G\right)  \geq\lambda^{\left(  p\right)
}\left(  G\right)  n^{2/p-1}\geq\lambda^{\left(  p\right)  }\left(
T_{r}\left(  n\right)  \right)  >\left(  1-\frac{1}{r}\right)  n-\frac{r}{4n},
\]
and applying Theorem \ref{t4} with $p=2,$ we get a subgraph $H\subset G$ of
order
\[
k>A^{p/\left(  \gamma p-\gamma\right)  }n=\left(  1-1/r\right)  ^{2r^{4}%
}n>\frac{n}{r^{2r^{4}}},
\]
with $\lambda^{\left(  p\right)  }\left(  H\right)  >\left(  1-1/r\right)  k$
and $\delta\left(  H\right)  >\left(  1-1/r-1/r^{4}\right)  k.$ Again we see
that $\log k>2r^{\left(  r+8\right)  r},$ due to
\begin{align*}
\log k &  >\log n-\log r^{2r^{4}}>\frac{1}{2}\log n+2r^{\left(  r+8\right)
r}\frac{p}{p-1}-2r^{4}\log r\\
&  >\frac{1}{2}\log n+\left(  2r^{\left(  r+8\right)  r-2}-2r^{4}\right)  \log
r>\frac{1}{2}\log n>\frac{1}{c}>2r^{\left(  r+8\right)  r}.
\end{align*}
Hence, setting $a:=2c,$ Corollary \ref{cor1} implies that $K_{r}^{+}\left(
\left\lfloor 2c\log k\right\rfloor \right)  \subset H.$ Since
\[
2c\log k>c\log n,
\]
Theorem \ref{sE} is proved for $p>2,$ as well.
\end{proof}

\medskip

\textbf{Acknowledgements\medskip}

The research of the first author was partially supported by the National
Natural Science Foundation of China (Nos. 11171207, 91130032). Part of this
work was done while the second author was visiting Shanghai University and
Hong Kong Polytechnic University in the Fall of 2013. He is grateful for the
outstanding hospitality of these universities. The authors are grateful to
Prof. Xiying Yuan for helpful discussions.\bigskip


\begin{thebibliography}{99}                                                                                               %


\bibitem {Bol78}B. Bollob\'{a}s, \emph{Extremal Graph Theory,} Academic Press
Inc., London-New York, 1978, xx+488 pp.

\bibitem {Bol98}B. Bollob\'{a}s, \emph{Modern Graph Theory,} Graduate Texts in
Mathematics, \textbf{184,} Springer-Verlag, New York (1998), xiv+394 pp.

\bibitem {BoNi04}B. Bollob\'{a}s and V. Nikiforov, Joints in graphs,
\emph{Discrete Math. }\textbf{308} (2008), 9-19.

\bibitem {Erd63}P. Erd\H{o}s, On the structure of linear graphs, \emph{Israel
J. Math.} \textbf{1} (1963), 156--160.

\bibitem {Erd69}P. Erd\H{o}s, On the number of complete subgraphs and circuits
contained in graphs, \emph{\v{C}asopis P\v{e}st. Mat.} \textbf{94} (1969), 290--296.

\bibitem {ErSi73}P. Erd\H{o}s, M. Simonovits, On a valence problem in extremal
graph theory, $\emph{Dis}$\emph{$A$}$\emph{rete}$ $\emph{Math.}$ \textbf{5}
(1973), p. 323-334.

\bibitem {HLP88}G.H. Hardy, J.E. Littlewood, and G. P\'{o}lya,
\emph{Inequalities, 2nd edition}, Cambridge University Press, 1988, vi+324 pp.

\bibitem {HJZ11}B. He, Y.L. Jin and X.D. Zhang, Sharp bounds for the signless
Laplacian spectral radius in terms of clique number, \emph{\ Linear Algebra
Appl. }\textbf{438} (2013), 3851--3861.

\bibitem {KNY14}L. Kang, V. Nikiforov, and X.Yuan, The $p$-spectral radius of
$k$-partite and $k$-chromatic uniform hypergraphs. Preprint available at
\emph{arXiv:1402.0442.}

\bibitem {KLM13}P. Keevash, J. Lenz, and D. Mubayi, Spectral extremal problems
for hypergraphs. Preprint available at\emph{ arXiv:1304.0050.}

\bibitem {MoSt65}T. Motzkin and E. Straus, Maxima for graphs and a new proof
of a theorem of Tur\'{a}n. \emph{Canad. J. Math.,} \textbf{17}, (1965), 533-540.

\bibitem {Nik02}V. Nikiforov, Some inequalities for the largest eigenvalue of
a graph, \emph{Combin. Probab. Comput.} \textbf{11} (2002), 179-189.

\bibitem {Nik07}V. Nikiforov, Bounds on graph eigenvalues II, \emph{Linear
Algebra Appl.} \textbf{427} (2007), 183-189.

\bibitem {Nik08}V. Nikiforov, A spectral condition for odd cycles,
\emph{Linear Algebra Appl.} \textbf{428} (2008), 1492-1498.

\bibitem {Nik09b}V. Nikiforov, Spectral saturation: inverting the spectral
Tur\'{a}n theorem, \emph{Electronic J. Combin.} \textbf{15} (2009), R33.

\bibitem {Nik10}V. Nikiforov, Tur\'{a}n's theorem inverted, \emph{Discrete
Math.} \textbf{310}, (2010), 125-131.

\bibitem {Nik11}V. Nikiforov, Some new results in extremal graph theory, in
\emph{Surveys in Combinatorics,} Cambridge University Press, 2011, pp. 141--181.

\bibitem {Nik14}V. Nikiforov, Some extremal problems for hereditary properties
of graphs, \emph{Electron. J. Combin. }\textbf{21}, (2014), P1.17.

\bibitem {NikA}V. Nikiforov, An analytic theory of extremal hypergraph
problems. Preprint available at\emph{ arXiv:1305.1073v2}.

\bibitem {NikB}V. Nikiforov, Analytic methods for uniform hypergraphs.
Preprint available at\emph{ arXiv:1308.1654v3}.

\bibitem {Sim68}M. Simonovits, A method for solving extremal problems in graph
theory, stability problems, in: \emph{Theory of Graphs (Proc. Colloq., Tihany,
1966),} pp. 279--319, Academic Press, New York, 1968.

\bibitem {Tur41}P. Tur\'{a}n, On an extremal problem in graph theory (in
Hungarian), $\emph{Mat.}$ \emph{\'{e}s Fiz. Lapok} \textbf{48 }(1941) 436-452.

\bibitem {Wil86}H. Wilf, Spectral bounds for the clique and independence
numbers of graphs, \emph{J. Combin. Theory Ser. B} \textbf{40} (1986), 113-117.

\bibitem {Zyk49}A. A. Zykov, On some properties of linear complexes (in
Russian), \emph{Mat. Sbornik N.S.} \textbf{24}(66), (1949), 163--188.
\end{thebibliography}
\end{document}